\newtheorem{theorem}{Theorem}[section]
\newtheorem{lemma}[theorem]{Lemma}
\newtheorem{corollary}[theorem]{Corollary}
\theoremstyle{definition}
\newtheorem{question}[theorem]{Question}
\theoremstyle{remark}
\newtheorem{remark}[theorem]{Remark}
\newcommand{\Cc}{\mathbb{C}}
\newcommand{\Dd}{\mathbb{D}}
\newcommand{\Ee}{\mathbb{E}}
\newcommand{\Nn}{\mathbb{N}}
\newcommand{\Pp}{\mathbb{P}}
\newcommand{\Rr}{\mathbb{R}}
\newcommand{\Uu}{\mathbb{U}}
\newcommand{\Un}{\mathds{1}} 
\newcommand{\Be}{\mathcal{B}}
\newcommand{\Fe}{\mathcal{F}}
\newcommand{\Ge}{\mathcal{G}}
\newcommand{\Le}{\mathcal{L}}
\newcommand{\Ig}{\mathfrak{I}}
\newcommand{\Rg}{\mathfrak{R}}
\newcommand{\Sg}{\mathfrak{S}}
\newcommand{\eg}{\mathfrak{e}}
\newcommand{\mg}{\mathfrak{m}}
\newcommand{\Lambdab}{{\boldsymbol{\Lambda}}}
\newcommand{\gammab}{{\boldsymbol{\gamma}}}
\newcommand{\thetab}{{\boldsymbol{\theta}}}
\newcommand{\ab}{{\boldsymbol{a}}}
\newcommand{\bb}{{\boldsymbol{b}}}
\newcommand{\ub}{{\boldsymbol{u}}}
\newcommand{\xb}{{\boldsymbol{x}}}
\newcommand{\zb}{{\boldsymbol{z}}}
\def\inv{^{-1}} 
\def\longlongrightarrow{\hspace{+0.1ex} - \hspace{-1.1ex} - \hspace{-1.1ex} - \hspace{-1.1ex}\longrightarrow  } 
\def\geq{\geqslant}
\def\leq{\leqslant}
\def\Re{\Rg \eg}
\def\Im{\Ig \mg}
\let\oldforall\forall
\def\forall{\oldforall\,}
\let\oldexists\exists
\def\exists{\oldexists\,}
\newcommand{\ensemble}[1]{ \left\lbrace #1 \right\rbrace }   
\newcommand{\prth}[1]{\!\left( #1 \right) }  
\newcommand{\Esp}[1]{ \Ee \prth{ #1 } }  
\newcommand{\Espr}[2]{ \Ee_{#1} \prth{ #2 } }  
\newcommand{\Prob}[1]{ \Pp \prth{ #1 } }
\newcommand{\crochet}[1]{\!\left[ #1 \right] }  
\newcommand{\intcrochet}[1]{\llbracket #1 \rrbracket} 
\newcommand{\abs}[1]{\left| #1 \right|}  
\newcommand{\norm}[1]{\left| \! \left| #1 \right| \! \right|} 
\newcommand{\Unens}[1]{ \Un_{ \ensemble{#1} } }
\newcommand{\tendvers}[2]{ \underset{#1 \rightarrow #2}{\longlongrightarrow} } 
\newcommand{\equivalent}[2]{ \underset{#1 \rightarrow #2}{\sim} }
\def\eqlaw{\stackrel{\Le}{=}}
\def\per{ \operatorname{per} }
\def\Bergman{ \Be }
\def\divise{ | }
\def\tr{ \operatorname{tr} }
\def\Ai{ \operatorname{Ai} }
\definecolor{rougeclair}{rgb}{1,.65,.65}
\newcommand{\emailhref}[1]{ \email{\href{mailto:#1}{#1}} }
\title[On the largest root modulus of a Kac polynomial]{Extreme value statistics for the roots of a complex Kac polynomial}
\author[Y. Barhoumi-Andr\'eani]{Yacine Barhoumi-Andr\'eani}
\address{Department of Statistics, University of Warwick, Coventry CV4 7AL, U.K.}
\date{\today}
\subjclass[2010]{60B20, 60B99, 15B52}
\begin{document}

\begin{abstract}
We investigate the fluctuations and large deviations of the root of largest modulus in a model of random polynomial with independent complex Gaussian coefficients (Kac polynomials). The fluctuations were recently computed by R. Butez (arxiv 1704.02761) and involve a Fredholm determinant. The precise large deviations involve a function defined by a series of mutiple integrals similar to such an expansion and exhibit a link with moments of characteristic polynomials of truncated Haar-distributed random unitary matrices.
\end{abstract}

\maketitle

\section{Introduction}

Consider the following random polynomial
\begin{align}\label{Def:ModelPolAl}
P(z) := \sum_{k = 0}^n G_k z^k = G_n \prod_{k = 1}^n (z - Z_{k, n})
\end{align}
where $ (G_k)_{0 \leq k \leq n} $ is a sequence of i.i.d. Gaussian random variables. These polynomials are commonly called Kac polynomials since their seminal study by Kac (\cite{KacPAG1} ; see also \cite[ch. 1.1]{DemboPoonenShaoZeitouni} for historical remarks). The study of their roots has given rise to numerous interesting questions, concerning for instance the number of roots in a given set (in the case where the $ G_k $'s are real, a classical question concerns the number of real roots of such a polynomial, see \cite{EdelmanKostlan}) or more generally the linear statistics of the roots, i.e. random variables of the form $ \frac{1}{n} \sum_{k = 1}^n f(Z_{k, n} ) $ for a given function $f$ (see the introduction of \cite{ButezPAG} and cited references for a review of recent results).

In the case of complex Kac polynomials, a lot is known about the repartition of the roots, in particular, one knows that the roots cluster uniformly around the unit circle (see e.g. \cite{HughesNikeghbaliPol, IbragimovZaporozhets}). Nevertheless, this does not preclude some particular roots to escape arbitrarily far away from this area and a natural question is thus to investigate if the maximum modulus of the roots is concentrated around $1$. Such a behaviour was recently investigated by R. Butez (\cite{ButezPAG}) who addresses the question of fluctuations of the extreme value statistics of the roots and answers it in the following theorem~:

\begin{theorem}[Butez]\label{Theorem:Butez} Order the sequence of roots $ (Z_{k, n})_{1 \leq k \leq n} $ according to their modulus, so that $ \abs{Z_{1, n} } < \cdots < \abs{Z_{n, n} } $ a. s. Then, $ (Z_{1, n})_n $ converges in distribution to a random variable $ Z^* $ a. s. inside the unit disk and $ (Z_{n, n})_n $ converges in law to $ 1/Z^* $. Moroever, the point process $ \ensemble{ Z_{k, n}, \abs{Z_{k, n} } < 1 } $ converges in distribution in the space of Radon measures\footnote{We associate a point process $ (X_k)_k $ to a point measure $ \sum_k \delta_{X_k} $, see \cite{ButezPAG}.} to the roots of the Gaussian analytic function $ \prth{ \sum_{n \geq 0} G_n z^n }_{ \abs{z} < 1 } $ whose law is given by a determinantal point process with Bergman kernel (the zeroes of such an analytic function are countable with no accumulation points a. s.). 
\end{theorem}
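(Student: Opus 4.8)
The plan is to reduce the three assertions to three classical facts: the almost sure convergence of the partial sums $P_n(z)=\sum_{k=0}^{n}G_k z^k$ to the Gaussian analytic function $F(z):=\sum_{k\ge 0}G_k z^k$, uniformly on compact subsets of the open unit disk $\Dd$; Hurwitz's theorem, which turns convergence of holomorphic functions into convergence of their zeros; and the self-inversive symmetry of the coefficient vector, which exchanges the smallest and the largest root modulus. The determinantal description of the zeros of $F$ by the Bergman kernel $\tfrac{1}{\pi(1-z\overline w)^{2}}$ is then exactly the theorem of Peres and Vir\'ag, so only the convergence statements remain; these I would establish under the natural coupling in which all the $P_n$ are built from one and the same sequence $(G_k)_{k\ge 0}$, obtaining almost sure convergence of the point process and of $Z_{1,n}$, and then deducing the law of $Z_{n,n}$ by symmetry. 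As a preliminary, since the $G_k$ are i.i.d.\ and nondegenerate, Borel--Cantelli gives $\limsup_k\abs{G_k}^{1/k}=1$ almost surely, so $F$ has radius of convergence exactly $1$, defines on $\Dd$ a holomorphic function which is a.s.\ not identically zero ($F(0)=G_0\neq 0$ a.s.), and $P_n\to F$ uniformly on $\ensemble{\abs{z}\le r}$ for every $r<1$. A standard Fubini/codimension argument shows moreover that, for each fixed $n$, almost surely $P_n$ vanishes nowhere on the unit circle $\Tt$ and has roots of pairwise distinct moduli, so the ordering $\abs{Z_{1,n}}<\cdots<\abs{Z_{n,n}}$ is a.s.\ well defined; similarly, for all but a Lebesgue-null (hence a.s.\ avoided) set of radii $r<1$ the function $F$ has no zero of modulus $r$.

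Next I would run the Hurwitz argument. Any test function in $C_c(\Dd)$ is supported in some disk $\ensemble{\abs{z}\le r}$ with $r<1$, which we may take in the good set of radii above; Hurwitz's theorem, applied on a neighbourhood of $\ensemble{\abs{z}\le r}$, then shows that for all $n$ large enough $P_n$ has in $\ensemble{\abs{z}<r}$ exactly as many zeros (with multiplicity) as $F$, and that, after a matching relabelling, these converge to the zeros of $F$ there. Because the zeros of the nonzero holomorphic function $F$ are isolated in $\Dd$ — hence countable, with possible accumulation only on $\Tt$ — this yields, almost surely, the vague convergence of the point measure $\sum_{k\,:\,\abs{Z_{k,n}}<1}\delta_{Z_{k,n}}$ to $\sum_{F(\zeta)=0,\,\abs{\zeta}<1}\delta_{\zeta}$; this is precisely the claimed convergence in distribution in the space of Radon measures on $\Dd$, and Peres--Vir\'ag identifies the limit as the determinantal process with the Bergman kernel. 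Taking $r$ small in the same argument, $Z_{1,n}$ — the zero of $P_n$ of least modulus — converges almost surely to $Z^{\ast}$, the zero of $F$ of least modulus, which is a.s.\ a well-defined point of $\Dd$ (the Bergman intensity $\tfrac{1}{\pi(1-\abs{z}^2)^2}$ is not integrable on $\Dd$, so $F$ has a.s.\ infinitely many zeros) and is nonzero since $F(0)\neq 0$. In particular $Z_{1,n}\cvlaw{n}{\infty}Z^{\ast}$ with $Z^{\ast}$ a.s.\ inside $\Dd$.

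For the largest root I would invoke symmetry. Since the $G_k$ are i.i.d., $(G_0,\dots,G_n)\eqlaw(G_n,\dots,G_0)$, hence $P_n\eqlaw\widetilde P_n$, where $\widetilde P_n(z):=z^nP_n(1/z)=\sum_{k=0}^{n}G_{n-k}\,z^k$. As $P_n(0)=G_0\neq 0$ a.s., every $Z_{k,n}$ is nonzero and, away from the origin, the zeros of $\widetilde P_n$ are exactly the reciprocals $1/Z_{k,n}$; thus the zero of $\widetilde P_n$ of largest modulus is $1/Z_{1,n}$. Applying the measurable functional that maps a polynomial to its root of largest modulus to both polynomials, the equality in law gives $Z_{n,n}\eqlaw 1/Z_{1,n}$ for every $n$. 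Since $Z_{1,n}\to Z^{\ast}$ in distribution and $Z^{\ast}\neq 0$ almost surely, the continuous mapping theorem applied to $z\mapsto 1/z$ gives $Z_{n,n}\cvlaw{n}{\infty}1/Z^{\ast}$, which finishes the proof.

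All the ingredients are classical, and the one point demanding genuine care is the passage to the limit over an exhausting sequence of radii in the Hurwitz step: one must make sure that the roots drifting towards $\Tt$ do not interfere with vague convergence on the open disk (this is exactly where it matters that test functions are compactly supported inside $\Dd$), and that the exceptional null events — a zero of $F$ sitting on one of the chosen circles, a coincidence of moduli among the $Z_{k,n}$ — are discarded consistently along the whole sequence. This is bookkeeping rather than a real obstruction, but it is where the write-up has to be careful.
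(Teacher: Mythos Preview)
Your argument is correct and is, in fact, close to Butez' original approach, which the paper cites but does not reproduce. The paper instead gives an \emph{alternative} proof: it computes the $k$-point correlation functions $\rho_k^{(n)}$ of the roots via the Hammersley formula \eqref{Eq:RootsCorrelations}, shows directly that the covariance functions $h_n, g_n, f_n$ in \eqref{Def:CovariancesPolAndDerivatives} converge uniformly on $\Dd(y)^k$ for $y<1$, and deduces $\rho_k^{(n)}\to\rho_k^{(\infty)}$; the Peres--Vir\`ag theorem then identifies $\rho_k^{(\infty)}$ as the determinantal correlations of the Bergman kernel, and the gap probability follows by inclusion--exclusion and an explicit diagonalisation of $\Bergman$ on $L^2(\Dd(y\inv))$.

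The trade-off is clear. Your route (uniform convergence $P_n\to F$ on compacta, Hurwitz, self-reciprocity) is softer and yields almost sure convergence under the natural coupling, not merely convergence in law; it also makes the point-process convergence on $\Dd$ transparent without ever touching correlation functions. The paper's route is more analytic and has the advantage of being purely at the level of intensities: it transfers immediately to other Gaussian polynomial models once one checks convergence of the three covariance kernels, and it leads directly to the explicit product form $\prod_{k\ge 1}(1-y^{-2k})$ via the eigenvalue computation for $\Bergman$, which your argument does not recover without invoking Peres--Vir\`ag's identification of the moduli. One small point of care in your write-up: when you say ``taking $r$ small'', what you really need is $r$ chosen between $\abs{Z^*}$ and the second smallest modulus of a zero of $F$ (a.s.\ distinct by the determinantal description), so that Hurwitz gives exactly one zero of $P_n$ in $\{\abs{z}<r\}$ and this zero is necessarily $Z_{1,n}$.
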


The determinantal character of the roots of such a random analytic function is a direct consequence of a theorem of Peres and Vir\`ag (see \cite{PeresVirag}), a corollary of which is the explicit computation of the law of $ \max_{1 \leq k \leq n } \abs{Z_{n, k} } $ when $ n \to \infty $ :

\begin{theorem}[Peres-Vir\`ag] Let $ (U_k)_{k \geq 1} $ be a sequence of i.i.d. random variables uniformly distributed on the interval $ \crochet{0, 1} $. Then, one has the following equality in distribution
\begin{align*}
\frac{1}{Z^*} \eqlaw \max_{k \geq 1} U_k^{-1/2k}
\end{align*}
\end{theorem}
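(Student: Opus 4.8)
\textit{Proof proposal.} The plan is to express the law of $Z^*$ through the gap probabilities of the determinantal point process of zeros, to compute these from the spectrum of the restricted Bergman operator, and to recognise the answer as the distribution function of the right-hand side.

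By the Peres-Vir\`ag theorem recalled above, the zeros of $f(z) = \sum_{n \geq 0} G_n z^n$ form a determinantal point process on the open unit disk $\Dd$, with respect to Lebesgue (area) measure $dA$, whose kernel is the Bergman kernel $K(z, w) = \pi^{-1}(1 - z \bar w)^{-2}$. Since $Z^*$ is the zero of least modulus and the zero set is almost surely locally finite in $\Dd$, one has for every $r \in (0, 1)$ that $\abs{Z^*} > r$ holds if and only if $f$ has no zero in $D_r := \ensemble{ \abs{z} < r }$. Hence $\Prob{ \abs{Z^*} > r }$ equals the probability that the point process charges no point of $D_r$, that is, a gap probability.

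I would then invoke the standard description of the counting statistics of a determinantal process with a locally trace-class self-adjoint kernel: the number of points in $D_r$ has the law of an independent sum $\sum_j B_{\lambda_j}$ of Bernoulli variables whose parameters $\lambda_j \in [0, 1]$ are the eigenvalues of the integral operator $K_{D_r}$ on $L^2(D_r, dA)$, so that the gap probability equals $\prod_j (1 - \lambda_j)$. The spectrum of $K_{D_r}$ is explicit because the operator commutes with rotations: expanding $K(z, w) = \pi^{-1} \sum_{k \geq 0} (k + 1)(z \bar w)^k$ shows that on the $k$-th angular sector $K_{D_r}$ acts through the rank-one radial kernel $\pi^{-1}(k+1)(z\bar w)^k$, hence contributes exactly one non-zero eigenvalue there, and integrating the monomial $w \mapsto w^k$ against $K$ over $D_r$ identifies it as $r^{2k + 2}$. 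Therefore
\begin{align*}
\Prob{ \abs{Z^*} > r } = \prod_{k \geq 1} \prth{ 1 - r^{2k} }, \qquad r \in (0, 1),
\end{align*}
the product converging since $\sum_{k \geq 1} r^{2k} < \infty$.

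On the other side, a direct computation gives, for $t > 1$,
\begin{align*}
\Prob{ \max_{k \geq 1} U_k^{-1/2k} \leq t } = \prod_{k \geq 1} \Prob{ U_k \geq t^{-2k} } = \prod_{k \geq 1} \prth{ 1 - t^{-2k} },
\end{align*}
while both distribution functions vanish for $t \leq 1$ (the maximum is finite almost surely since $U_k^{-1/2k} \to 1$, by Borel-Cantelli applied to $\Prob{ \log(1/U_k) > \varepsilon k } = e^{-\varepsilon k}$). Comparing with $\Prob{ 1/\abs{Z^*} \leq t } = \Prob{ \abs{Z^*} > 1/t } = \prod_{k \geq 1}(1 - t^{-2k})$ yields the asserted equality in law, understood at the level of moduli --- equivalently, for $1/Z^*$ itself once the right-hand side is multiplied by an independent uniform phase on the unit circle, which is harmless because the law of $Z^*$ is rotationally invariant. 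The one point deserving care is the middle step: one must check that $K_{D_r}$ is trace-class for $r < 1$ (true since $\sum_k r^{2k} < \infty$), so that the Bernoulli-sum representation of the counting function, hence the product formula for the gap probability, is legitimate; the remainder is elementary.
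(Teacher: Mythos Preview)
Your proposal is correct and follows essentially the same route as the paper's own argument (given in the proof of the Corollary in the Fluctuations section): compute the gap probability $\Prob{\abs{Z^*}>r}$ as the Fredholm determinant $\det(I-\Be)_{L^2(\Dd(r))}$, diagonalise the Bergman operator on $\Dd(r)$ by applying it to the monomials $z\mapsto z^k$ to obtain the eigenvalues $r^{2k}$ ($k\geq 1$), and match the resulting product $\prod_{k\geq 1}(1-r^{2k})$ with the distribution function of $\max_k U_k^{-1/2k}$. Your phrasing via the Bernoulli-sum representation of the counting function is a cosmetic variant of the same product formula, and your extra remarks on trace-class verification, Borel--Cantelli for finiteness of the maximum, and the rotational phase are welcome pieces of care that the paper leaves implicit.
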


As one can see, the root of maximum modulus does not converge to $1$. One can check moreover that
\begin{align*}
\Prob{\max_{k \geq 1} U_k^{-1/2k} \leq y } = \prod_{ k \geq 1 } \prth{1 - y^{-2k } } \Unens{ y > 1 }
\end{align*}
namely, its law is supported on $ (1, +\infty) $. This fact raises the natural question of the large deviations of $ \max_{1 \leq k \leq n } \abs{Z_{n, k} } $ inside the unit circle. This is the main theorem of the paper.

\begin{theorem}[Precise left large deviations]\label{Theorem:MainTheorem} Let $ y \in (0, 1) $. Then
\begin{align*}
\Prob{ \max_{1 \leq k \leq n } \abs{Z_{n, k} } \leq y }  =  \frac{ y^{n(n + 1) } }{ n^{n + 1 } } \prth{ \Fe(y) + O_y\prth{ \frac{1}{ n } } }
\end{align*}
with
\begin{align*}
\Fe(y) := \sum_{k \geq 0} \frac{ (-1)^{ \frac{ k( k + 1)}{2} }    }{ k! \,(k + 1)! }    \int_{ \Uu^k } \prod_{ i, j = 1}^k \frac{   1  }{  1 -  y^2 u_i \overline{u_j} }  \prod_{\ell = 1}^k  \frac{d u_\ell}{ 2 i \pi  u_\ell }
\end{align*}
\end{theorem}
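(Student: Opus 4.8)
The plan is to compute $\Prob{\max_k \abs{Z_{n,k}} \le y}$ exactly as a Fredholm-type expansion and then extract the leading asymptotics in $n$. The starting point is that the event $\{\max_k \abs{Z_{n,k}} \le y\}$ is the event that $P$ has all its roots in the disk of radius $y$, equivalently that $P(yz)$ has all its roots in the unit disk. Since $P(z) = \sum_{k=0}^n G_k z^k$ with i.i.d.\ complex Gaussians, I would use the fact that the coefficient vector $(G_k)_{0 \le k \le n}$ is a standard complex Gaussian in $\Cc^{n+1}$, rotationally invariant, so that the law of the (ordered) root set has an explicit density. Concretely, the roots of a random polynomial with i.i.d.\ complex Gaussian coefficients form a determinantal-like ensemble; the probability that all $n$ roots lie in a disk can be written via the known formula for the joint density of the roots of a Kac polynomial (going back to the change of variables from coefficients to roots, with the associated Jacobian being a Vandermonde-type factor). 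Precisely, I expect
\begin{align*}
\Prob{\max_k \abs{Z_{n,k}} \le y} = \frac{1}{Z_n} \int_{(y\Dd)^n} \frac{\prod_{i<j}\abs{z_i - z_j}^2}{\prth{\sum_{k=0}^n e_k(z)^{\!*} e_k(z)}^{n+1}} \,\prod_\ell d^2 z_\ell,
\end{align*}
or a variant thereof, where $e_k$ are elementary symmetric functions; after a rescaling $z_\ell \mapsto y z_\ell$ this pulls out an explicit power of $y$.

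The cleaner route, and the one I would actually pursue, is to bypass the root density and work directly with coefficients using the ``reciprocal polynomial'' trick: all roots of $P$ lie in $y\Dd$ iff all roots of the reversed polynomial $z^n \overline{P(\bar y^2/z)}$ type object lie outside, which ties the quantity to the probability that a related random analytic object is non-vanishing in an annulus. More efficiently still, I would exploit that $\max_k\abs{Z_{n,k}} \le y$ is equivalent to $G_n \ne 0$ together with $\abs{G_n}^{-1}\abs{G_{n-j}}$ controlled through the elementary symmetric polynomials: writing $Z_{k,n} = y W_{k,n}$, the event becomes that all $W_{k,n}$ lie in $\Dd$. Here the key structural input is the Peres--Vir\`ag / Hammersley-type formula: for the \emph{infinite} model, the zero set inside $\Dd$ is determinantal with Bergman kernel, and the gap probability $\Prob{\text{no zero in } r\Dd}$ is a Fredholm determinant $\det(1 - K_r)$ whose expansion is exactly a series of multiple integrals of the form $\prod_{i,j}(1-r^2 u_i\bar u_j)^{-1}$. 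The strategy is therefore to show that $y^{-n(n+1)} n^{n+1}\Prob{\max_k\abs{Z_{n,k}} \le y}$ converges to the analogous Fredholm expansion for the truncated/finite model, and to identify the limit with $\Fe(y)$.

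The concrete mechanism: I would expand the indicator $\Un\{\text{all roots in } y\Dd\}$ — or rather compute the Gaussian integral — by integrating out $G_n$ first (contributing the overall $1/n^{n+1}$ up to constants, after the Jacobian from roots to coefficients) and then recognizing the remaining integral over the $n$ roots in $y\Dd$ as a determinantal computation. Using the Andr\'eief (Heine) identity, the Vandermonde-squared density integrated against the weight produces a determinant $\det\prth{\int_{y\Dd} z^i \bar z^j w(z)\, d^2z}_{0 \le i,j \le n-1}$ for the appropriate weight $w$; after rescaling to the unit disk and a careful bookkeeping of the $y$-powers one arrives at $y^{n(n+1)}$ times a determinant of an $n \times n$ matrix whose entries converge (as $n \to \infty$, with a $1/n$ correction) to the moments $\int_\Uu u^i \bar u^j (1 - y^2 u\bar u)^{-1}\,\frac{du}{2i\pi u}$ concentrated on the circle — this is where the truncated-Haar-unitary characteristic-polynomial connection mentioned in the abstract enters (the relevant moment matrix is that of a truncated CUE). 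Expanding that determinant by its definition (sum over permutations / cycle structure) and taking $n \to \infty$ produces exactly the series $\sum_k \frac{(-1)^{k(k+1)/2}}{k!(k+1)!}\int_{\Uu^k}\prod_{i,j}(1-y^2u_i\bar u_j)^{-1}$, with the signs $(-1)^{k(k+1)/2}$ coming from the determinantal expansion of the inverse moment matrix (or the alternating structure of the Fredholm series together with a shift).

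The main obstacle I anticipate is the rigorous control of the error term $O_y(1/n)$: one must show both that the finite-$n$ moment matrix entries converge to their circle-concentrated limits at rate $1/n$ \emph{uniformly} enough, and that the resulting determinant expansion converges (the series $\Fe(y)$ converges because $\abs{(1-y^2u_i\bar u_j)^{-1}}$ is bounded by $(1-y^2)^{-1}$ on $\Uu^k$ and the $k!(k+1)!$ in the denominator beats $(1-y^2)^{-k^2}$ — one needs a Hadamard-type bound on the $k\times k$ determinant to see the series is genuinely convergent, not merely formal). Making the interchange of ``sum over $k$'' and ``limit in $n$'' legitimate, i.e.\ getting a dominated-convergence / uniform-tail estimate on the truncated determinantal expansion, is the delicate analytic heart of the argument; the algebraic identification of the limit series with $\Fe(y)$, by contrast, should be a bookkeeping exercise once the moment asymptotics and the Hadamard bounds are in place.
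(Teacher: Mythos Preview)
Your high-level orientation is correct --- start from the explicit root density, rescale by $y$ to pull out $y^{n(n+1)}$, and connect what remains to a truncated-CUE ensemble --- but the concrete mechanism you propose has a structural gap. After rescaling, the density on $\Dd^n$ is proportional to $\abs{\Delta(\zb)}^2 / E(\zb)^{n+1}$ where $E(\zb) = \sum_k y^{2k}\abs{e_k(\zb)}^2 = \int_0^1 \prod_j \abs{1 - y e^{2i\pi\theta} z_j}^2\,d\theta$. This weight is \emph{not} a product over the $z_j$'s, so the Andr\'eief/Heine identity does not apply and there is no $n\times n$ moment determinant $\det\bigl(\int z^i\bar z^j w(z)\,d^2z\bigr)_{i,j}$ as you describe.

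The device you are missing, and which the paper uses, is to linearise the awkward power via the Gamma integral $n!/x^{n+1} = \int_0^\infty e^{-tx}t^n\,dt$. This turns the probability into $\int_0^\infty t^n\,\Espr{n}{e^{-t\,\eta_n(y)}}\,dt$ with $\eta_n(y) = \int_0^1 \abs{\det(I - ye^{2i\pi\theta}M_n)}^2 d\theta$ and $\Ee_n$ the truncated-CUE expectation. Expanding the exponential, the $k$-th term involves $\Espr{n}{\eta_n(y)^k}$, which by Fubini becomes an integral over $\thetab\in[0,1]^k$ of the joint moment $\Espr{n}{\prod_\ell \abs{Z_n(e^{2i\pi\theta_\ell}/y)}^2}$. \emph{That} object has a closed $k\times k$ (not $n\times n$) determinantal formula, namely $\det\bigl(g_{n+k}(u_i\bar u_j)\bigr)_{i,j\le k}/\abs{\Delta(\ub)}^2$; the asymptotics of $g_N$ together with the Cauchy determinant identity convert this ratio exactly into $\prod_{i,j}(1-y^2u_i\bar u_j)^{-1}$ times an explicit prefactor, which is where the series $\Fe(y)$ comes from. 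The factor $n^{-(n+1)}$ arises not from ``integrating out $G_n$'' (that step only produces the $n!$ in the root density) but from splitting the $t$-integral at the cutoff $A = y^n/n$ and showing the tail beyond $A$ is negligible.

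A smaller but genuine issue: your convergence argument for $\Fe(y)$ is wrong as stated. The crude sup bound on the integrand over $\Uu^k$ is $(1-y^2)^{-k^2}$ (the integrand $\prod_{i,j}(1-y^2u_i\bar u_j)^{-1} = (1-y^2)^{-k}\prod_{i<j}\abs{1-y^2u_i\bar u_j}^{-2}$ is nonnegative, so there is no free sign cancellation), and $k!\,(k+1)!$ does \emph{not} beat $(1-y^2)^{-k^2}$. Any honest control must exploit the integration over $\Uu^k$ more carefully, or --- as the paper effectively does --- keep $n$ finite throughout the expansion and pass to the limit termwise.
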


The plan of this article is the following : we start by recalling some classical facts about Kac polynomials, and we give an alternative proof to Butez' theorem \ref{Theorem:Butez}, we then treat in an elementary way the left large deviations (in the logarithmic setting, thus) and we finally prove theorem \ref{Theorem:MainTheorem}. The proof involves the characteristic polynomial of a model of truncated random unitary matrices distributed according to the Haar measure (the so-called Circular Unitary Ensemble). We conclude with questions of interest, perspectives and future work on the topic.

\section*{Notations and properties}

We gather here some notations used throughout the paper. The unit circle will be denoted by $ \Uu $, the unit disk by $ \Dd $ and the disk of radius $y$ by $ \Dd(y) $, i.e. 
\begin{align*}
\Dd(y) := \ensemble{ z \in \Cc \ : \ \abs{z} \leq y }
\end{align*}

When integrating on $ \Uu $ or any curve in the complex plane, we set 
\begin{align*}
d^*u := \frac{du}{2 i \pi}
\end{align*}

The Stirling formula is
\begin{align}\label{Eq:StirlingFormula}
\log(n!) = n \log(n) - n + \frac{1}{2} \log(2\pi n) + O\prth{ \frac{1}{n} } \ \ \ \Longleftrightarrow \ \ \ n! = e^{ O(1/n) } \prth{ \frac{n}{e} }^n \sqrt{2\pi n}
\end{align}

We define the Vandermonde determinant, for $ \ab := (a_1, \dots, a_n) $, by 
\begin{align*}
\Delta(\ab) := \prod_{1 \leq i < j \leq n } (a_i - a_j)
\end{align*}

In particular, for $ u_j \in \Uu $, one has
\begin{align}\label{Eq:VandermondeDuality}
\Delta(\ub) & = \prod_{1 \leq i < j \leq k} (u_j - u_i) = \prod_{1 \leq i < j \leq k} u_i u_j(u_i\inv - u_j\inv) = \overline{\Delta(\ub) } \prod_{1 \leq i < j \leq k} (- u_i u_j) \notag \\
             & = (-1)^{ k(k + 1)/2 } \prod_{1 \leq  j \leq k}  u_j^{k - 1}  \, \overline{\Delta(\ub) } 
\end{align}

For $ k \geq 0 $, we define the $k$-th elementary symmetric polynomial by $ e_0(\ab) := 1 $ and 
\begin{align*}
e_k(\ab)  := \sum_{1 \leq i_1 < i_2 < \cdots < i_k \leq n}  a_{i_1} a_{i_2} \cdots  a_{i_k}  
\end{align*}

We recall that for all $ t \in \Cc $, we have the following equality (see e.g. \cite{MacDo})
\begin{align}\label{Eq:GeneratingElementary}
\sum_{k = 0 }^n e_k(\zb) t^k = \prod_{k = 1}^n \prth{ 1 + t z_k }
\end{align}
%
%
%

The Cauchy determinant is given by 
\begin{align}\label{Def:CauchyDeterminant}
\det\prth{ \frac{1}{a_i + b_j } }_{1 \leq i, j \leq k} =   \frac{ \Delta(\ab)  \Delta(\bb)  }{ \prod_{1 \leq i, j \leq k} (a_i + b_j ) } 
\end{align}

Last, for a matrix $ A $, we will write $ A\crochet{i, j} $ for its $ (i, j) $-coefficient.

\section{Fluctuations}

\subsection{Reminders on random Gaussian polynomials} From now on, we denote by $ (Z_k)_k $ in place of $ (Z_{k, n})_k $ the roots of the complex Kac polynomial \eqref{Def:ModelPolAl}.

The \textit{correlation functions} or \textit{joint intensities} of the roots $ (Z_k)_{1 \leq k \leq n} $ in \eqref{Def:ModelPolAl} are defined by  

\begin{align*}
\rho^{(n)}_k(z_1, \dots, z_k) := \Esp{ \sum_{ i_1 \neq \cdots \neq i_k \in \intcrochet{1, n} } \prod_{ \ell = 1 }^k \delta_0\prth{ Z_{i_\ell} - z_\ell } }  
\end{align*}

Let $ f_Z(x) $ denote the Lebesgue-density of the random variable $ Z $. Using the Hammersley formula (see e.g. \cite[(3.3.1) p. 39]{BenHougKrishnapurPeresVirag}) that writes, for any random polynomial $ P $
\begin{align*}
\rho_{P, k} (z_1, \dots, z_k) & = \Esp{ \prod_{\ell = 1}^k \abs{ P'(z_\ell) }^2 \delta_0\prth{ P(z_\ell) } } \\
                 & =  f_{(P(z_1), \dots, P(z_k) ) }(0, \dots, 0)\ \Esp{ \prod_{\ell = 1}^k \abs{ P'(z_\ell) }^2 \bigg\vert P(z_1) = \dots = P(z_k) = 0 } 
\end{align*}
one gets the formula (see \cite[(3.4.2) p. 40]{BenHougKrishnapurPeresVirag} and references cited)
\begin{align}\label{Eq:RootsCorrelations}
\rho_k^{(n)}(z_1, \dots, z_k) =  \frac{ \per\prth{ C_{k,n}(\zb) - B_{k,n}(\zb) A_{k,n}(\zb)\inv B_{k,n}(\zb)^* } }{ \det( \pi A_{k,n}(\zb) ) } 
\end{align}
where $ A_{k,n}(\zb), B_{k,n}(\zb) $ and $ C_{k,n}(\zb) $ are the $ k \times k $ matrices of general term given by 
\begin{align}\label{Def:CovariancesPolAndDerivatives}
\begin{aligned} 
A_{k,n}(\zb)\crochet{i, j} & := \Esp{ P(z_i) \overline{P(z_j) } } = \frac{1 - x^n}{1 - x} \bigg\vert_{x = z_i \overline{z_j}}  =: h_n(z_i \overline{z_j})  \\
B_{k,n}(\zb)\crochet{i, j} & := \Esp{ P(z_i) \overline{P'(z_j) } } = \frac{d}{dx} \frac{1 - x^n}{1 - x} \bigg\vert_{x = z_i \overline{z_j} } \times z_i =: z_i g_n(z_i \overline{z_j}) \\
C_{k,n}(\zb)\crochet{i, j} & := \Esp{ P'(z_i) \overline{P'(z_j) } } = \prth{\frac{d}{dx} }^2 \frac{1 - x^n}{1 - x} \bigg\vert_{x = z_i \overline{z_j}} \times z_i \overline{z_j} =: z_i \overline{z_j} f_n(z_i \overline{z_j})
\end{aligned}
\end{align}

Let $ (X_k)_{k \geq 1} $ be a sequence of random variables with values in $ \Rr $ and with correlation functions $ (\rho_{X, k})_k $. Then, if for a set $ A \subset \Rr $ one has 
\begin{align*}
\sum_{k \geq 0} \frac{1}{k! } \int_{ A^k } \abs{ \rho_{X, k}(x_1 \dots, x_k) } dx_1 \dots dx_k < \infty
\end{align*}
then, using an inclusion-exclusion formula (see e.g. \cite{BorodinPPDSurvey}), one gets 
\begin{align}\label{Eq:MaxWithCorrels}
\Prob{ \forall k \geq 1, \ X_k \notin A } = \sum_{k \geq 0} \frac{ (-1)^k  }{k! } \int_{ A^k }  \rho_{X, k}(x_1 \dots, x_k)  dx_1 \dots dx_k  
\end{align}

We will use this formula to compute the limiting distribution of the maximum modulus of \eqref{Def:ModelPolAl}.

\subsection{The limiting distribution of the maximum modulus}

We define the maximum modulus of the roots by
\begin{align}\label{Def:MaximumModulus}
\rho_{n} := \max_{1 \leq k \leq n } \abs{ Z_k }
\end{align}

The following gap probability for $ y > 1 $ 
\begin{align*}
\Prob{\rho_{n} \leq y } = \Prob{ \forall k \in \intcrochet{1, n}, \ \abs{Z_k} \leq y } 
\end{align*}
is known to converge to (see \cite{ButezPAG})
\begin{align*}
\Prob{ \forall k \geq 1 , \ \abs{Y_k} \geq y\inv } = \det\prth{ I - \Bergman }_{ L^2( \Dd(y\inv) ) }
\end{align*}
where $ \Bergman $ is the operator of kernel $ B(z, z') :=   (1 - z \overline{z'})^{-2} $ (Bergman kernel) acting on $ L^2( \Dd, dz/\pi  ) $ and where the process $ (Y_k)_{k \geq 1} $ is determinantal of kernel $ \Bergman $.

This is a consequence of the convergence of the polynomial $ (P_n(z))_{z \in \Dd(r) } $ to the Gaussian Analytic Function $ (P_\infty(z))_{z \in \Dd(r) } := ( \sum_{k \geq 0} G_k z^k )_{z \in \Dd(r) } $ for all $ r < 1 $ and the measurability of the map $ P \mapsto \max_{P(x) = 0} \abs{x} $ (see \cite{ButezPAG}). We moreover have for all $ y \in \Rr_+ $
\begin{align}\label{Eq:LimitLaw}
\Prob{\rho_{n} \leq y } \tendvers{n }{ +\infty } \Prob{ \frac{1}{\min_{k \geq 1} \ensemble{ U_k^{1/2k} } } \leq y } = \prod_{ k \geq 1 } \prth{ 1 - y^{-2k } } \Unens{ y > 1 }
\end{align}
where $ (U_k)_{k \geq 1} $ is a sequence of i.i.d. uniform random variables on $ \crochet{0, 1} $.

We now prove this result in a more analytic way. 

\begin{lemma} The correlation functions $ \rho_k^{(n)} $ of the roots of the Gaussian polynomial \eqref{Def:ModelPolAl} converge for all $k$, when $ n \to \infty $, uniformly on $ \Dd(y)^k $ for all $ y < 1 $. 
\end{lemma}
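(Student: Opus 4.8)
The plan is to read the convergence directly off the closed formula \eqref{Eq:RootsCorrelations}: the only dependence on $n$ there sits in $A_{k,n}(\zb),B_{k,n}(\zb),C_{k,n}(\zb)$, whose entries are $h_n(x)=\frac{1-x^{n}}{1-x}=\sum_{0\le j<n}x^{j}$, $g_n(x)=h_n'(x)$ and $f_n(x)=h_n''(x)$ evaluated at $x=z_i\overline{z_j}$. On $\Dd(y)^{k}$ one has $\abs{z_i\overline{z_j}}\le y^{2}<1$, so $h_n,g_n,f_n$ converge on $\overline{\Dd(y^{2})}$, with geometric speed (the tails being $O(n^{2}y^{2n})$) and together with all their derivatives, to $h_\infty(x)=(1-x)\inv$, $h_\infty'$, $h_\infty''$. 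Consequently $A_{k,n}(\zb),B_{k,n}(\zb),C_{k,n}(\zb)$ converge, entrywise, uniformly and with all $\zb$-derivatives on $\Dd(y)^{k}$, to the covariance matrices $A_{k,\infty}(\zb),B_{k,\infty}(\zb),C_{k,\infty}(\zb)$ of $\prth{P_\infty(z_i)}_{i\le k}$ and $\prth{P_\infty'(z_i)}_{i\le k}$ for the Gaussian analytic function $P_\infty(z):=\sum_{j\ge0}G_jz^{j}$; in particular $A_{k,\infty}(\zb)\crochet{i,j}=(1-z_i\overline{z_j})\inv$.

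Next I would control the limiting denominator: factoring $1-z_i\overline{z_j}=-z_i(\overline{z_j}-z_i\inv)$ out of each entry and applying \eqref{Def:CauchyDeterminant} gives
\begin{align*}
\det\prth{\pi A_{k,\infty}(\zb)}=\pi^{k}\,\frac{\abs{\Delta(\zb)}^{2}}{\prod_{1\le i,j\le k}(1-z_i\overline{z_j})},
\end{align*}
which is bounded on $\Dd(y)^{k}$ and bounded away from $0$ on every compact subset where the coordinates $z_1,\dots,z_k$ stay pairwise distinct. Combined with the first step, on such compacta the ratio \eqref{Eq:RootsCorrelations} converges uniformly to $\rho_k^{(\infty)}(\zb):=\per\prth{C_{k,\infty}(\zb)-B_{k,\infty}(\zb)A_{k,\infty}(\zb)\inv B_{k,\infty}(\zb)^{*}}/\det\prth{\pi A_{k,\infty}(\zb)}$, the $k$-point correlation function of the zero set of $P_\infty$ (again by Hammersley's formula).

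The remaining — and main — point is uniformity up to the diagonal $\bigcup_{i<j}\ensemble{z_i=z_j}$, where $\det\prth{\pi A_{k,\infty}(\zb)}$ vanishes like $\abs{z_i-z_j}^{2}$; one must produce a matching vanishing of the numerator, uniformly in $n$. Here I would use a confluent reduction: performing, simultaneously in the blocks defining $A_{k,n},B_{k,n},C_{k,n}$, the column operation replacing the $j$-th column by its difference with the $i$-th divided by $z_j-z_i$ (and the symmetric row operation) amounts, as $z_j\to z_i$, to turning the constraint ``$P(z_j)=0$'' into ``$P_\infty'(z_i)=0$'' (Taylor expansion), so that the conditional covariance $C_{k,n}-B_{k,n}A_{k,n}\inv B_{k,n}^{*}$ has its $i$-th and $j$-th rows and columns tending to $0$, each such entry being $O(\abs{z_i-z_j}^{2})$ uniformly in $n$ by the $C^{\infty}$-convergence of the first step. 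Every term of the permanent then carries one factor from row $i$ and one from row $j$, so the numerator is $O(\abs{z_i-z_j}^{4})$ near the diagonal, again uniformly in $n$; dividing numerator and denominator by $\prod_{i<j}\abs{z_i-z_j}^{2}$ exhibits $\rho_k^{(n)}=N_n(\zb)/D_n(\zb)$ with $N_n\to N_\infty$, $D_n\to D_\infty$ uniformly on all of $\Dd(y)^{k}$ and $\inf_{\Dd(y)^{k}}\abs{D_\infty}>0$, whence uniform convergence of $\rho_k^{(n)}$ on $\Dd(y)^{k}$ (with the by-product that $\rho_k^{(\infty)}$ is continuous and vanishes on the diagonal for $k\ge2$). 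Equivalently, one could invoke Arzel\`a--Ascoli: $\prth{\rho_k^{(n)}}_n$ is uniformly bounded and equicontinuous on $\Dd(y)^{k}$ and converges pointwise off the null diagonal, hence converges uniformly there too. Either way, the delicate — and only non-routine — step is the uniform-in-$n$ control of $\rho_k^{(n)}$ near coincident arguments; away from the diagonal everything is soft.
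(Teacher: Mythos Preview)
Your core argument --- reduce to the uniform convergence of $h_n,g_n,f_n$ on $\overline{\Dd(y^{2})}$ and then read off the convergence of the ratio \eqref{Eq:RootsCorrelations} --- is exactly the paper's. The paper, however, stops at that point and simply asserts that the ratio converges uniformly on $\Dd(y)^{k}$, without ever discussing the diagonal $\bigcup_{i<j}\{z_i=z_j\}$ where $\det(\pi A_{k,\infty}(\zb))$ vanishes. You correctly identify this as the only non-routine step and supply the missing cancellation argument; in that sense your proof is more complete than the paper's.

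One small correction to your diagonal analysis: the claim that \emph{every} entry in the $i$-th and $j$-th rows and columns of the conditional covariance $M=C_{k,n}-B_{k,n}A_{k,n}\inv B_{k,n}^{*}$ is $O(\abs{z_i-z_j}^{2})$ is too strong. The entries $M[i,i],M[j,j],M[i,j],M[j,i]$ are indeed $O(\abs{z_i-z_j}^{2})$ (since $\operatorname{Var}(P'(z_i)\mid\cdots)=O(\abs{z_i-z_j}^{2})$ as you observe), but for $m\notin\{i,j\}$ Cauchy--Schwarz only gives $M[i,m]=O(\abs{z_i-z_j})$. Your conclusion $\per(M)=O(\abs{z_i-z_j}^{4})$ is nonetheless correct: each term of the permanent selects one entry from row $i$, one from row $j$, one from column $i$ and one from column $j$ (with possible coincidences), and a short case analysis on how the permutation acts on $\{i,j\}$ shows the product of these factors is always $O(\abs{z_i-z_j}^{4})$. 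So only the intermediate justification needs adjusting; the outcome and the overall strategy stand.
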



\begin{proof}
Using the formula \eqref{Eq:RootsCorrelations}, one sees that it is enough to prove the convergence of the covariances given in \eqref{Def:CovariancesPolAndDerivatives} in the underlying domain, namely, to prove that $ f_n $, $h_n$ and $g_n$ defined in \eqref{Def:CovariancesPolAndDerivatives} converge inside the unit disc. One has, uniformly in $ z \in \Dd(y) $ with $ y < 1 $
\begin{align*}
h_n(z) := \frac{1 - z^n}{1 - z} \tendvers{n}{+\infty } \frac{1}{1 - z} =: h_\infty(z)
\end{align*}

Moreover, 
\begin{align*}
g_n(z) = h_n'(z) = \frac{ 1 + (n - 1)z^n - n z^{n - 1} }{ (1 - z)^2 }  \tendvers{n}{+\infty } \frac{1}{ (1 - z)^2 } =: g_\infty(z)
\end{align*}
and 
\begin{align*}
f_n(z) & = h_n''(z) = \frac{  2 - (n-1)(n - 2) z^n + 2n(n - 2) z^{n - 1}  - n(n - 1) z^{n - 2} }{ (1 - z)^3 }  \\
       & \tendvers{n}{+\infty } \frac{ 2 }{ (1 - z)^3 } =: f_\infty(z)
\end{align*}

As a result, one has the convergence, uniformly in $ \Dd(y)^k $
\begin{align*}
\rho_k^{(n)}(z_1, \dots, z_k) \tendvers{n}{+\infty } \rho_k^{(\infty )}(z_1, \dots, z_k)
\end{align*}
with 
\begin{align}\label{Def:CorrelsLimites}
\rho_k^{(\infty )}  := \frac{ \per( C_{k,\infty} - B_{k,\infty} A_{k,\infty}\inv B_{k,\infty}^* ) }{ \det(\pi A_{k,\infty}  ) }
\end{align}
\end{proof}


\begin{corollary} The following convergence is satisfied for all $ y \geq 0 $
\begin{align}\label{Thm:LimitLaw}
\Prob{\rho_{n} \leq y } \tendvers{n }{ +\infty }  \prod_{ k \geq 1 } \prth{ 1 - y^{-2k } } \Unens{ y > 1 }
\end{align}

\end{corollary}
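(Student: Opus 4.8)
The plan is to combine the uniform convergence of the correlation functions established in the lemma with the inclusion--exclusion formula \eqref{Eq:MaxWithCorrels}, applied to the point process of moduli. First I would note that, because $ \abs{Z_k} \leq y $ for all $k$ is the event that no root lies in the complement of $ \Dd(y) $, we may equally work with the event $ \ensemble{ \forall k, \ 1/Z_k \notin \Dd(y\inv)^c }$, or directly estimate $ \Prob{\rho_n \leq y} $ via \eqref{Eq:MaxWithCorrels} with $ A = \Dd(y)^c $ intersected with a large disk (to make the integrals finite for fixed $n$); for $ y > 1 $, the relevant set after inversion is $ \Dd(y\inv) $, which is compact and contained in $ \Dd $, so the series has only finitely many effectively contributing terms in the limit. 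The key analytic input is that on any $ \Dd(y')^k $ with $ y' < 1 $ the functions $ h_n, g_n, f_n $ converge uniformly to $ h_\infty, g_\infty, f_\infty $, hence $ \rho_k^{(n)} \to \rho_k^{(\infty)} $ uniformly on compact subsets of $ \Dd^k $, and the limiting kernel is (a matrix-argument version of) the Bergman kernel.

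Second, I would pass to the limit term by term in \eqref{Eq:MaxWithCorrels}. For this I need a dominated-convergence argument: a uniform-in-$n$ bound on $ \abs{\rho_k^{(n)}} $ over $ \Dd(y\inv)^k $ that is summable after integration, so that the interchange of limit and sum is legitimate. Since each $ \rho_k^{(n)} $ is a ratio of a permanent to a determinant of $ k\times k $ matrices whose entries are uniformly bounded (and whose denominator $ \det(\pi A_{k,n}) $ stays bounded away from $0$ uniformly on the compact set, by the uniform convergence to the invertible limit $ \det(\pi A_{k,\infty}) $), one gets a crude bound of the form $ \abs{\rho_k^{(n)}} \leq C^k k! $ on $ \Dd(y\inv)^k $, and since $ \mathrm{Leb}(\Dd(y\inv))^k = (\pi y^{-2})^k \to 0 $ faster, the series $ \sum_k \frac{1}{k!} \int \abs{\rho_k^{(n)}} $ converges uniformly in $n$. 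This justifies
\begin{align*}
\Prob{\rho_n \leq y} \tendvers{n}{+\infty} \sum_{k \geq 0} \frac{(-1)^k}{k!} \int_{\Dd(y\inv)^k} \rho_k^{(\infty)}(z_1, \dots, z_k)\, \prod_\ell \frac{dz_\ell}{\pi} = \det\prth{ I - \Bergman }_{L^2(\Dd(y\inv))},
\end{align*}
recognising the Fredholm-determinant expansion of the determinantal process with Bergman kernel, exactly as in Butez' theorem.

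Finally, I would identify this Fredholm determinant with the product $ \prod_{k \geq 1}(1 - y^{-2k}) $ for $ y > 1 $, and check it vanishes (i.e. equals $0$) for $ 0 \leq y \leq 1 $. The product formula follows from the Peres--Vir\`ag description: the determinantal process with Bergman kernel on $ \Dd $ has the same law as $ \ensemble{ U_k^{1/2k} e^{i\theta_k} } $ for i.i.d. uniform $ U_k $ (up to rotation), so $ \Prob{ \forall k, \ \abs{Y_k} \geq y\inv } = \Prob{ \min_k U_k^{1/2k} \geq y\inv } = \prod_k \Prob{U_k \geq y^{-2k}} = \prod_k (1 - y^{-2k}) $ when $ y > 1 $, the product converging since $ y^{-2k} $ is summable. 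For $ y \leq 1 $ the limiting law places no mass below $1$ (the intensity of roots of the GAF inside $ \Dd $ integrates to $ +\infty $ near the boundary, so a. s. infinitely many roots have modulus in any neighbourhood of $1$ from below, forcing $ \rho_\infty = 1/Z^* > 1 $ a. s.), hence the limit is $0 = \prod(\cdots)\Un_{y>1} $. The main obstacle I anticipate is the uniform-in-$n$ integrable domination needed to swap limit and sum rigorously; everything else is either the uniform convergence already proven in the lemma or a direct consequence of the cited Peres--Vir\`ag theorem.
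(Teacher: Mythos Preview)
Your outline is essentially the same strategy as the paper's: pass from the inclusion--exclusion expansion \eqref{Eq:MaxWithCorrels} to a Fredholm determinant via the uniform convergence of the correlation functions from the lemma, and then identify the determinant with the infinite product. Two points of comparison are worth noting.

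First, the paper handles the reduction to a compact domain more cleanly than your discussion of ``$\Dd(y)^c$ intersected with a large disk'': it invokes the self-reciprocity $(Z_k)_k \eqlaw (Z_k\inv)_k$ (coming from $(z^n P(z\inv))_{\abs{z}<1} \eqlaw (P(z))_{\abs{z}>1}$) to rewrite $\ensemble{\max_k \abs{Z_k} \leq y}$ as $\ensemble{\min_k \abs{Z_k} \geq y\inv}$, so that the relevant set is $\Dd(y\inv)$ from the outset and no ad hoc truncation is needed.

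Second, and more substantively, the two arguments diverge in how the product $\prod_{k\geq 1}(1 - y^{-2k})$ is obtained. You invoke the Peres--Vir\`ag description of the moduli as $(U_k^{1/2k})_{k\geq 1}$, which is perfectly valid but imports a nontrivial probabilistic result. The paper instead gives a direct analytic computation: it shows that the monomials $f_{k,t}(z) = z^k$ are eigenfunctions of the Bergman operator on $L^2(\Dd(t))$ with eigenvalues $t^{2k}$, whence $\det(I-\Bergman)_{L^2(\Dd(y\inv))} = \prod_{k\geq 1}(1 - y^{-2k})$. This eigenvalue computation is short, self-contained, and avoids appealing to the full Peres--Vir\`ag machinery; your route is shorter to state but leans on a deeper black box.

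On the domination you flag as the main obstacle: your crude bound $\abs{\rho_k^{(n)}} \leq C^k k!$ combined with $\mathrm{Leb}(\Dd(y\inv))^k$ gives a geometric series with ratio $C\pi y^{-2}$, but $C$ depends on $\sup_{\Dd(y\inv)^2}\abs{1-z\bar w}^{-2} = (1-y^{-2})^{-2}$, which blows up as $y\downarrow 1$, so this does not obviously cover all $y>1$. The paper, in fairness, does not spell out the domination either; a cleaner route is to note that for each fixed $n$ the sum is finite (there are only $n$ roots, so $\rho_k^{(n)}=0$ for $k>n$) and then argue via tightness or directly via the determinantal structure of the limit.
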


\begin{proof}
The convergence of the extreme values of the roots of \eqref{Def:ModelPolAl} is then a consequence of the following result from \cite{PeresVirag} : the limiting correlation functions \eqref{Def:CorrelsLimites} take the form 
\begin{align*}
\rho_k^{(\infty)}(z_1, \dots, z_k) = \det\prth{ g_\infty (z_i \overline{z_j}) }_{1 \leq i, j \leq k}
\end{align*}

This theorem uses the invariance of the limiting analytic function $ (\sum_{k \geq 0} G_k z^k)_{z \in \Dd} $ by the group of homographies of $ \Dd $ and the Borchardt's identity (see \cite[5.1.12]{BenHougKrishnapurPeresVirag}).

Writing $ \ensemble{ \max_k \abs{Z_k} < y } = \{ \min_k \abs{Z_k}\inv > y\inv \} $ and $ (Z_k)_k \eqlaw (Z_k\inv)_k $ due to the self-reciprocity of the polynomial, i.e. $ (z^n P(z\inv))_{ \abs{z} < 1 } \eqlaw (P(z))_{ \abs{z} > 1 }  $, one has, for all $ y > 1 $
\begin{align*}
\Prob{\rho_{n} \leq y } & = \sum_{k = 0}^n  \frac{1}{k! } \int_{ \Dd(y\inv)^k }  \rho_k^{(n)}(z_1 \dots, z_k)  dz_1 \dots dz_k \\
                & \tendvers{n}{+\infty } \sum_{k \geq 0} \frac{1}{k! } \int_{ \Dd(y\inv)^k }  \rho_k^{(\infty)}(z_1 \dots, z_k)  dz_1 \dots dz_k = \det\prth{ I - \Be }_{ L^2( \Dd(y\inv) ) }
\end{align*}

As a corollary, the set of modulus of the roots converge in distribution of an i.i.d. sequence $ (U_k^{1/2k})_{k \geq 1} $ where $ (U_k)_k $ is a sequence of i.i.d. random variables uniformly ditributed on $ \crochet{0, 1} $ (see \cite[th. 4.7.1 \& cor. 5.1.7]{BenHougKrishnapurPeresVirag}).

In the case of the maximum modulus of the roots, one can give a more analytical proof of this last result by finding the eigenvectors of the Bergman kernel acting on $ L^2(\Dd(t), dz) $ with $ t < 1 $. Indeed, setting $ \ensemble{\lambda_k(t) }_{k \geq 1} $ for the set of eigenvalues of the compact operator $ \Be $ acting on $ L^2( \Dd(t) ) $. The Bergman kernel is a trace-class operator on $ L^2(\Dd(t)) $ since $ \tr_{L^2( \Dd(t) ) }( \abs{ \Be }) = \int_{ \Dd(t) } \abs{ 1 - z \overline{z} }^{-2} \frac{dw}{\pi t^2 } < \infty $. Thus, one has (see e.g. \cite{GohbergGoldbergKrupnik})
\begin{align*}
\det\prth{ I - \Be }_{ L^2( \Dd(t) ) } = \prod_{k \geq 1} (1 - \lambda_k(t) )
\end{align*}
%
%
%

Consider the functions $ f_{k, t} \equiv f_k : z \in \Dd(t) \mapsto z^k $ for $ t < 1 $. Then, 
\begin{align*}
\Be f_{k, t}(z)   & = \int_{ \Dd(t) } (1 - z \overline{w})^{-2} w^k \frac{dw}{\pi t^2 } \\
                  & = \sum_{\ell \geq 0} \ell z^{\ell - 1 } \int_{ \Dd(t) } \overline{w}^{\ell - 1} w^k \frac{dw}{\pi t^2 } \\
                  & = \sum_{\ell \geq 0} \ell z^{\ell - 1 } 2\pi \Unens{k = \ell - 1} \int_0^t r^{\ell - 1} r^k \frac{r dr}{\pi t^2 } \\
                  & = (k + 1) z^k 2\pi \int_0^t r^{2k}   \frac{r dr}{\pi t^2 } = (k + 1) z^k 2 \pi \frac{t^{ 2k + 2 } }{2k + 2}   \frac{1}{\pi t^2 } \\
                  & = t^{2k} z^k = t^{2k} f_{k, t}(z) 
\end{align*}
hence the result.
\end{proof}

\begin{remark}
As the correlation functions determine the process, this proof applies to other models of random Gaussian polynomials if one can prove the convergence of the covariances. In general, using the inclusion-exclusion \eqref{Eq:MaxWithCorrels} for $ A = (-\infty, y) $, one gets
\begin{align*}
\Prob{ \max_{k \geq 1} X_k \leq y } = \sum_{k \geq 0} \frac{ (-1)^k }{k! } \int_{ [y, +\infty[^k }  \rho_{X, k}(x_1 \dots, x_k)  dx_1 \dots dx_k 
\end{align*}
\end{remark}

\subsection{A remark on the Tracy-Widom distribution}

The Tracy-Widom distribution (see \cite{TracyWidom}) writes as a Fredholm determinant in the same vein as the previous probability and satisfies moreover
\begin{align*}
F_{TW_2}(y) =  \exp\prth{ - \int_y^{+\infty } H_{TW_2}(t) dt  }
\end{align*}
where $ H_{TW_2}(t) $ writes as
\begin{align*}
H_{TW_2}(t) := \int_t^{+\infty } q(s)^2 ds  =  \int_\Rr q(s)^2 \Unens{ s - t \geq 0  } ds = q^2 * \Un_{\Rr_+}(t)
\end{align*}
with $q$ the Hastings-McLeod solution of the Painlev\'e II equation $ q''(s) = sq(s) + q(s)^2 $ satisfying $ q(x) \sim \Ai(x) $ when $ x\to +\infty $, $ \Ai $ being the Airy function (see \cite{HastingsMcLeod}) and $*$ designates the additive convolution $ f*g(x) := \int_\Rr f(t) g(x - t) dt $.

We are interested in the equivalent form for the probability density \eqref{Thm:LimitLaw}. For this, we write for $ y > 1 $
\begin{align*}
-\log \Prob{ \rho_\infty \leq  y }  & = - \sum_{k \geq 1} \log\prth{ 1 - y^{-2k } } = \sum_{k \geq 1} \int_y^{+\infty } \frac{ 2 k s^{-2 k - 1}  }{1 - s^{-2k } } ds \\
                & = 2 \int_y^{+\infty } \sum_{k \geq 1} \sum_{\ell \geq 0}  k s^{-2 k - 1 - 2 k \ell} ds  \\
                & = 2 \int_y^{+\infty } \sum_{k \geq 1} \sum_{m \geq 1}  k s^{-2 k m} \frac{ds}{s }    = 2 \int_y^{+\infty } \sum_{d \geq 1} \prth{ \sum_{k, m \geq 1}  k \Unens{km = d} } s^{-2 d} \frac{ds}{s }    
\end{align*}

Define the sum of divisors of $ d \in \Nn^* $ by
\begin{align*}
\sigma(d) :=  \sum_{k, m \geq 1}  k \Unens{km = d} = \sum_{k \divise d } k
\end{align*}

Then, setting $ \Sg(s) := 2 \sum_{d \geq 1} \sigma(d) s^{-2 d}  $, one gets
\begin{align*}
-\log \Prob{ \rho_\infty \leq  y }  = 2 \int_y^{+\infty } \sum_{d \geq 1} \sigma(d) s^{-2 d} \frac{ds}{s }  =  \int_y^{+\infty }  \Sg(s) \frac{ds}{s }   
\end{align*}

Note that one can write
\begin{align*}
\Prob{ \rho_\infty \leq  y } = \exp\prth{ \int_1^{+\infty } \Sg(y t) \frac{dt}{t} } = \exp\prth{ \int_0^1 \Sg(y t\inv) \frac{dt}{t} } = \exp\prth{ \Sg \star \Un_{ \crochet{0, 1} } (y) }
\end{align*}
where $ \star $ designates the multiplicative convolution defined by $ f \star g (x) := \int_{\Rr_+} f(t) g(x t\inv) \frac{dt}{t} $.

We see that $ \Sg $ is the analogue of $ - q^2 * \Un_{\Rr_+} $. This motivates the following question :

\begin{question}
Can one find the same decomposition with the Tracy-Widom distribution, namely, can one find the eigenvalues of the Airy operator which is the analogue of the Bergman operator in the previous setting ?
\end{question}

\section{Large deviations and precise large deviations}

\subsection{The density of the zeroes}

The distribution of the roots vector $ (Z_k)_{1 \leq k \leq n} $ of the polynomial \eqref{Def:ModelPolAl} is given by (see e.g. \cite{BenHougKrishnapurPeresVirag, BogomolnyBohigasLeboeuf, ForresterHonner, Hammersley, Kostlan})
\begin{align}\label{RootsLaw}
\Prob{ Z_1 \in dz_1, \dots, Z_n \in dz_n } = \frac{n! }{\pi^n} \frac{ \abs{ \Delta(\zb) }^2 }{ \crochet{ \sum_{k = 0}^n \abs{ e_k(\zb) }^2 }^{n + 1} } d\zb 
\end{align}
where we have set $ \zb := (z_1, \dots, z_n) $ and $ d\zb := \prod_{k = 1}^n d\Re(z_k) \, d\Im(z_k) $.

The formula \eqref{RootsLaw} is obtained using the transformation $ (X_0, X_1, \dots, X_n) \mapsto (X_n, Z_1, \dots, Z_n) $ given by the well-known formula valid for all $ k \in \intcrochet{1, n} $
\begin{align*}
X_{n - k} = (-1)^k X_n \,  e_k(Z_1, \dots, Z_n)
\end{align*}

The Jacobian of this transformation is given by the Vandermonde determinant (see e.g. \cite{BenHougKrishnapurPeresVirag}). One then integrates on $ X_n $ to get \eqref{RootsLaw}, namely
\begin{align*}
\Prob{Z_1 \in dz_1, \dots, Z_n \in dz_n } & = \abs{ \Delta(\zb) }^2 \int_\Cc f_{(X_0, \dots, X_n) }\prth{ \vphantom{a^{a^a}} (-1)^n y e_n(\zb), \dots, y e_0(\zb) } \abs{y}^{2n} \, dy \, d\zb \\
                & = \abs{ \Delta(\zb) }^2 \int_\Cc \exp\prth{ - \sum_{k = 0}^n \abs{ (-1)^k y \,  e_k(\zb) }^2 } \abs{y}^{2n}  \frac{dy }{\pi }\, \frac{ d\zb}{\pi^n} \\
                & =  \abs{ \Delta(\zb) }^2 \int_\Cc \exp\prth{ - \abs{ y }^2 \sum_{k = 0}^n \abs{ e_k(\zb) }^2 } \abs{y}^{2n}  \frac{dy }{\pi }\, \frac{ d\zb}{\pi^n} \\
                & =  \frac{\abs{ \Delta(\zb) }^2}{ ( \sum_{k = 0}^n \abs{ e_k(\zb) }^2 )^{n + 1} } \int_\Cc \exp\prth{ - \abs{ t }^2  } \abs{t}^{2n}  \frac{ dt }{\pi }\, \frac{ d\zb}{\pi^n} \\
                & =  \frac{\abs{ \Delta(\zb) }^2}{ ( \sum_{k = 0}^n \abs{ e_k(\zb) }^2 )^{n + 1} } \, \frac{n! }{\pi^n} \, d\zb
\end{align*}

Denote by $ f_n $ the Lebesgue-density of $ (Z_1, \dots, Z_n) $ :
\begin{align}\label{DensiteRacinesIndep}
f_n(\zb) = \frac{n! }{\pi^n} \frac{ \abs{ \Delta(\zb) }^2 }{ \crochet{  \sum_{k = 0}^n \abs{ e_k(\zb) }^2 }^{n + 1} } 
\end{align}

Using the Plancherel-Parseval formula for a polynomial, i.e.  $ \sum_k \abs{a_k}^2 = \int_0^1 \abs{ \sum_k a_k e^{2 i \pi k \theta} }^2 d\theta $, and \eqref{Eq:GeneratingElementary} we get
\begin{align*}
E(\zb) := \sum_{k = 0}^n \abs{ e_k(\zb) }^2 = \int_0^1 \abs{ 1 + \sum_{k = 1}^{n}  (-1)^{k} e_k(\zb) e^{2i \pi k \theta }   }^2 d\theta = \int_0^1 \prod_{k = 1}^n   \abs{ e^{2i \pi  \theta } - z_k  }^2 d\theta 
\end{align*}

This transforms \eqref{DensiteRacinesIndep} into
\begin{align}\label{DensiteRacinesIndepInt}
f_n(\zb) = \frac{n! }{\pi^n} \frac{ \abs{ \Delta(\zb) }^2 }{ \crochet{  \int_0^1  \prod_{k = 1}^n   \abs{ e^{2i \pi  \theta } - z_k  }^2 d\theta  }^{n + 1} } 
\end{align}

\begin{remark}
This form was used in \cite{ZeitouniZelditch} to prove a large deviations result for the empirical measure of the roots.  
\end{remark}

We are now interested in the tail of this last probability. Using \eqref{RootsLaw} and \eqref{DensiteRacinesIndepInt}, we have 
\begin{align*}
\Prob{\rho_{n} \leq y } = \Prob{ \forall k \in \intcrochet{1, n}, \ \abs{Z_k} \leq y } = \int_{\Dd(y)^n}  f_n( \zb) d\zb  = \int_{\Dd^n}  f_n(y \zb) y^{2n} d\zb 
\end{align*}

Using the scaling property of the Vandermonde determinant
\begin{align*}
\Delta(\lambda z_1, \dots, \lambda z_n) = \lambda^{ \frac{n(n-1)}{2} } \Delta(z_1, \dots, z_n)
\end{align*}
one gets
\begin{align*}
\Prob{\rho_{n} \leq y }  & = \int_{\Dd^n}  f_n(y \zb) y^{2n} d\zb  \\
                & = y^{2n} \frac{ n!}{ \pi^n } \int_{\Dd^n} \frac{ \abs{ \Delta(y \zb) }^2 }{ \crochet{  \int_0^1  \prod_{k = 1}^n   \abs{ e^{2i \pi  \theta } - y z_k  }^2 d\theta  }^{n + 1} }   d\zb  \\
                & =  y^{n(n + 1)} n!  \int_{\Dd^n} \frac{  \abs{ \Delta(  \zb) }^2 }{ y^{ 2n(n+1) } \crochet{  \int_0^1  \prod_{k = 1}^n   \abs{ e^{2i \pi  \theta } y\inv -  z_k  }^2 d\theta  }^{n + 1} }   \frac{ d\zb }{ \pi^n } \\
                & =  n!  \int_{\Dd^n} \frac{  \abs{ \Delta(  \zb) }^2 }{  \crochet{  y^{ n } \int_0^1  \prod_{k = 1}^n   \abs{ e^{2i \pi  \theta } y\inv -  z_k  }^2 d\theta  }^{n + 1} }   \frac{ d\zb }{ \pi^n }
\end{align*}

A classical computation shows that 
\begin{align*}
\int_{ \Dd^n }   \abs{ \Delta( \zb ) }^2   d\zb = \pi^n
\end{align*}

One can thus define the probability measure
\begin{align}\label{Def:TruncatedCUE}
\Pp_n\prth{d\zb} := \Unens{ \zb \in \Dd^n } \abs{\Delta(\zb)}^2 \frac{d\zb}{\pi^n}
\end{align}

There is a matrix model underlying such a probability measure given by a $ n $-block truncation of a $  CUE(n + 1) $ random matrix, i.e. a random Haar-distributed unitary matrix of size $ (n + 1) \times (n + 1) $ (see \cite[eq. (16)]{SommersZycTruncate}). 

Using the formula valid for all $ x > 0 $ and $ n \in \Nn^* $
\begin{align*}
\frac{n!}{x^n } = \int_0^{+\infty } e^{-t x} t^n dt
\end{align*}
we can write this last probability as
\begin{align*}
\Prob{\rho_{n} \leq y }  =   \int_{ \Rr_+ } \Espr{n}{ e^{ -t y^n\!\! \int_0^1 \abs{ Z_n\prth{   e^{ 2i\pi \theta } y\inv } }^2 d\theta } } t^n dt
\end{align*}
where $ Z_n(x) $ is the characteristic polynomial of the random matrix $ M_n $ from the truncated $ CUE $ given by
\begin{align*}
Z_n(x) := \det\prth{ x I -  M_n }
\end{align*}

\subsection{Large deviations}

In the limiting fluctuations of the largest modulus, the support of the limiting law given in \eqref{Thm:LimitLaw} is defined on $ [1, +\infty) $. We are interested in the the large deviations of $ \rho_n $ inside the disk.

\begin{theorem}[Left large deviations] Let $ y \in (0, 1) $. Then, $ (\rho_n)_n $ satisfies a (left) large deviation principle at speed $ n^2 $ with rate function $ y \mapsto \log(y\inv) $, namely, for all $ y \in (0, 1) $
\begin{align*}
\frac{1}{n^2} \log \Prob{ \rho_n \! \leq y } \tendvers{n}{+\infty } - \log(y\inv)
\end{align*}

More precisely, one has, with a $ O $ independent of $y$
\begin{align*}
\frac{1}{n^2} \log \Prob{ \rho_n \! \leq y } = - \log(y\inv) + O\prth{ \frac{\log n}{n} }
\end{align*}
\end{theorem}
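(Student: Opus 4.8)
The plan is to take the gap-probability identity just derived, pull out the factor $y^{n(n+1)}$ (already isolated above via the rescaling $\Delta(y\zb)=y^{n(n-1)/2}\Delta(\zb)$), and show that what remains is trapped between $e^{-O(n\log n)}$ and $1$. Introduce
\begin{align*}
S_n(\zb):=\sum_{m=0}^{n}y^{2m}\,\abs{e_m(\zb)}^2=\int_0^1\prod_{k=1}^{n}\abs{e^{2i\pi\theta}-yz_k}^2\,d\theta ,
\end{align*}
the second equality following from $\prod_k(e^{2i\pi\theta}-yz_k)=\sum_{j=0}^{n}(-1)^j y^j e_j(\zb)\,e^{2i\pi(n-j)\theta}$ and Parseval. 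Since $\prod_k\abs{e^{2i\pi\theta}y\inv-z_k}^2=y^{-2n}\prod_k\abs{e^{2i\pi\theta}-yz_k}^2$, the last displayed identity of the excerpt rewrites, using that $\Pp_n$ of \eqref{Def:TruncatedCUE} is a probability measure, as
\begin{align*}
\Prob{\rho_n\leq y}=n!\,\Espr{n}{\prth{y^{-n}S_n(\zb)}^{-(n+1)}}=n!\,y^{n(n+1)}\,\Espr{n}{S_n(\zb)^{-(n+1)}} .
\end{align*}
From here the whole statement follows from two elementary bounds on $S_n$ together with Stirling's formula \eqref{Eq:StirlingFormula}.

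For the \emph{upper bound}, note that $\theta\mapsto\prod_k(e^{2i\pi\theta}-yz_k)$ is a monic polynomial of degree $n$ in $e^{2i\pi\theta}$ (equivalently, the $m=0$ term of $S_n$ is $\abs{e_0(\zb)}^2=1$), so $S_n(\zb)\geq 1$ identically on $\Dd^n$; hence $S_n^{-(n+1)}\leq 1$, $\Prob{\rho_n\leq y}\leq n!\,y^{n(n+1)}$, and after taking logarithms, dividing by $n^2$ and using \eqref{Eq:StirlingFormula} one gets $\tfrac1{n^2}\log\Prob{\rho_n\leq y}\leq-\log(y\inv)+O(\log n/n)$, uniformly in $y$.

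For the \emph{lower bound}, convexity of $t\mapsto t^{-(n+1)}$ and Jensen's inequality give $\Espr{n}{S_n(\zb)^{-(n+1)}}\geq\prth{\Espr{n}{S_n(\zb)}}^{-(n+1)}$, so it suffices to control $\Espr{n}{S_n(\zb)}=\sum_{m=0}^{n}y^{2m}\,\Espr{n}{\abs{e_m(\zb)}^2}$ polynomially in $n$. This is the only point that is not completely routine, and the clean way through it is to observe that $e_m(\zb)\Delta(\zb)$ is itself a single generalized Vandermonde determinant: writing $\Delta(\zb)=\pm\det(z_i^{\,j-1})_{1\leq i,j\leq n}$, the bialternant identity (special case $e_m=s_{(1^m)}$, see \cite{MacDo}) gives $e_m(\zb)\Delta(\zb)=\pm\det(z_i^{\,\mu_j})_{1\leq i,j\leq n}$, where the exponent set is $\{\mu_1,\dots,\mu_n\}=\{0,1,\dots,n\}\setminus\{n-m\}$. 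Because $\int_\Dd z^a\overline{z}^{\,b}\,\frac{dz}{\pi}=\frac{\delta_{ab}}{a+1}$, expanding the squared determinant and keeping only the surviving diagonal terms gives $\int_{\Dd^n}\abs{\det(z_i^{\,\mu_j})}^2\,\frac{d\zb}{\pi^n}=n!\prod_j\frac1{\mu_j+1}$, so that
\begin{align*}
\Espr{n}{\abs{e_m(\zb)}^2}=n!\!\!\prod_{\substack{0\leq j\leq n\\ j\neq n-m}}\!\!\frac1{j+1}=\frac{n+1-m}{n+1}\leq 1 ,
\end{align*}
and therefore $\Espr{n}{S_n(\zb)}\leq\sum_{m=0}^{n}y^{2m}\leq n+1$, uniformly in $y$ (more sharply, $\leq(1-y^2)\inv$). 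Plugging this into the Jensen bound yields $\Prob{\rho_n\leq y}\geq n!\,y^{n(n+1)}(n+1)^{-(n+1)}$, hence $\tfrac1{n^2}\log\Prob{\rho_n\leq y}\geq-\log(y\inv)+O(\log n/n)$ by \eqref{Eq:StirlingFormula}.

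Combining the two bounds gives $\log\Prob{\rho_n\leq y}=n(n+1)\log y+O(n\log n)$ with an implied constant independent of $y$; dividing by $n^2$ gives $\tfrac1{n^2}\log\Prob{\rho_n\leq y}=-\log(y\inv)+O(\log n/n)$ and, a fortiori, the large deviation principle at speed $n^2$ with rate function $\log(y\inv)$. I expect the one genuine obstacle to be the polynomial control of $\Espr{n}{S_n(\zb)}$: the deterministic bound $\abs{e_m(\zb)}\leq\binom{n}{m}$ available on $\Dd^n$ would inflate $S_n$ by a factor as large as $4^n$, hence the probability by $e^{O(n^2)}$, and the estimate would be lost; one must genuinely use the $\abs{\Delta(\zb)}^2$-repulsion, which is exactly what collapsing $e_m(\zb)\Delta(\zb)$ to one generalized Vandermonde determinant with an explicitly computable $L^2(\Dd^n)$-norm accomplishes. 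Everything else reduces to Stirling's formula and convexity.
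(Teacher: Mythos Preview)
Your proof is correct and in fact more complete than the paper's. Both arguments start from the same identity $\Prob{\rho_n\leq y}=n!\,y^{n(n+1)}\,\Espr{n}{S_n(\zb)^{-(n+1)}}$ and both obtain the upper bound from $S_n\geq 1$ (since $e_0=1$). The paper, however, stops there: after writing $\Prob{\rho_n\leq y}=y^{n(n+1)}O(n!)$ it passes directly to the two-sided asymptotic, effectively treating this one-sided bound as an equality, and supplies no matching lower bound. Your Jensen step $\Espr{n}{S_n^{-(n+1)}}\geq(\Espr{n}{S_n})^{-(n+1)}$ together with the bialternant computation $\Espr{n}{\abs{e_m(\zb)}^2}=(n+1-m)/(n+1)$ (which I checked: the exponent set $\{0,\dots,n\}\setminus\{n-m\}$ and the orthogonality $\int_\Dd z^a\overline{z}^{\,b}\,dz/\pi=\delta_{ab}/(a+1)$ give exactly this) fills precisely that gap, yielding $\Espr{n}{S_n}\leq n+1$ and hence $\Prob{\rho_n\leq y}\geq n!\,y^{n(n+1)}(n+1)^{-(n+1)}$. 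Your closing remark is also to the point: the crude bound $\abs{e_m}\leq\binom{n}{m}$ on $\Dd^n$ would cost a factor $e^{O(n^2)}$ and ruin the estimate, so the $\abs{\Delta}^2$-repulsion really is needed, and collapsing $e_m\Delta$ to a single generalised Vandermonde is the clean way to exploit it.

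One small caveat, which applies equally to the paper: in passing from $\log\Prob{\rho_n\leq y}=n(n+1)\log y+O(n\log n)$ (where the $O$ is genuinely uniform in $y$) to $\tfrac{1}{n^2}\log\Prob{\rho_n\leq y}=-\log(y^{-1})+O(\log n/n)$, a term $(\log y)/n$ is absorbed into the $O$. Strictly speaking the implied constant in the final display is therefore uniform only for $y$ in compact subsets of $(0,1)$; this is a feature of the theorem's formulation rather than a flaw in your argument.
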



\begin{proof}

We have
\begin{align*}
\Prob{\rho_{n} \leq y }  = y^{ n(n + 1) } \Espr{n}{ \frac{n!}{ \crochet{  y^{2n}\!\! \int_0^1 \abs{ Z_n\prth{   e^{ 2i\pi \theta } y\inv } }^2 d\theta }^{n + 1} } }
\end{align*}

Define
\begin{align*}
\xi_n(y) & :=  \int_0^1 \abs{ Z_n\prth{ e^{-2i\pi \theta } y\inv } }^2 d\theta \\
\eta_n(y) & := y^{2n} \xi_n(y) = \int_0^1 \abs{ \det\prth{ I_n - y e^{ 2i\pi \theta } M_n  } }^2 d\theta
\end{align*}

We have
\begin{align*}
\eta_n(y) = \int_0^1 \abs{ \det\prth{ I_n - y e^{ 2i\pi \theta } M_n  } }^2 d\theta = \sum_{k = 0}^n y^{ 2k } \abs{ e_k(\Lambdab) }^2 \geq 1
\end{align*}
where $ \Lambdab $ are the eigenvalues of $ M_n $. This last inequality comes from the fact that $ e_0(\Lambdab) = 1 $. In particular, we have
\begin{align*}
\Espr{n}{ \frac{n!}{ \crochet{  y^{2n}\!\! \int_0^1 \abs{ Z_n\prth{   e^{ 2i\pi \theta } y\inv } }^2 d\theta }^{n + 1} } } \leq n!
\end{align*}
namely
\begin{align*}
\Prob{\rho_{n} \leq y }  = y^{ n(n + 1) } O\prth{ n! }
\end{align*}

Taking the logarithm and dividing by $ n^2 $, we get 
\begin{align*}
\frac{1}{n^2}\log \Prob{\rho_{n} \leq y }  & = \frac{n(n + 1)}{n^2} \log(y) +  \frac{ \log(n!)}{n^2} + O\prth{  \frac{1}{n^2} } \\
              & = - \log(y\inv) + \frac{ \log(n/e) }{n } +  \frac{ \log(y) }{n } + O\prth{ \frac{ \log(n) }{n^2} } \\
              & = - \log(y\inv) +  O\prth{  \frac{ \log(n)}{n } }
\end{align*}
where we have used the Stirling formula \eqref{Eq:StirlingFormula}.
\end{proof}

\subsection{Precise large deviations}

We now prove theorem \ref{Theorem:MainTheorem}.


\begin{proof}
Let $ A > 0 $ be a constant to be choosen later. Write
\begin{align*}
\Prob{\rho_{n} \leq y }  =   \prth{ \int_0^A + \int_A^{+\infty} \Espr{n}{ e^{ -t y^n\!\! \int_0^1 \abs{ Z_n\prth{   e^{ 2i\pi \theta } y\inv } }^2 d\theta } } t^n dt }
\end{align*}

Define 
\begin{align*}
F_n(y, A) :=  \int_0^A \Espr{n}{ e^{ -t y^n\!\! \int_0^1 \abs{ Z_n\prth{  e^{ 2i\pi \theta } y\inv  } }^2 d\theta } } t^n dt 
\end{align*}

Expanding the Laplace transform of the random variable 
\begin{align*}
\xi_n(y) := \int_0^1 \abs{ Z_n\prth{ e^{-2i\pi \theta } y\inv } }^2 d\theta
\end{align*}
and applying the Fubini theorem, one gets
\begin{align*}
F_n(y , A) & =  \sum_{k \geq 0} \frac{(-1)^k}{k! } \int_0^A t^{n + k} dt  \, y^{kn} \, \Espr{n}{ \prth{ \int_0^1 \abs{ Z_n\prth{ e^{ 2i\pi \theta  } y\inv } }^2 d\theta }^k } \\
              & =  \sum_{k \geq 0} \frac{ (-1)^k }{ k! } \frac{ A^{n + k + 1} }{n + k + 1 } \, y^{kn} \, \Espr{n}{ \prth{ \int_0^1 \abs{ Z_n\prth{  e^{ 2i\pi \theta   } y\inv } }^2 d\theta }^k }
\end{align*}

This last moment expansion is valid if the law of $ \xi_n(y) $ is defined by its moments. We now prove this fact. Using the Fubini theorem, we have
\begin{align*}
\Esp{ \xi_n(y)^k } & = \Espr{n}{ \prth{ \int_0^1 \abs{ Z_n\prth{ e^{ 2i\pi \theta  } y\inv } }^2 d\theta }^k } \\
                   & = \int_{ \crochet{0, 1}^k } \Espr{n}{ \prod_{\ell = 1}^k \abs{ Z_n\prth{ e^{ 2i\pi \theta_\ell   } y\inv } }^2 } d\thetab
\end{align*}

An explicit formula is available to compute the integer moments of the characteristic polynomial (see e.g. \cite[eq. (2.11)]{AkemannVernizzi} ; for the reader's convenience, this result is reminded in section \ref{Section:Annexe1}), namely
\begin{align}\label{Eq:MomentsCharPol}
\Espr{n}{ \prod_{\ell = 1}^k \abs{ Z_n(u_\ell) }^2   } = \frac{ \det\prth{ g_{n + k}(u_i \overline{v_j}) }_{1 \leq i, j \leq k } }{ \abs{ \Delta(\ub) }^2 } \times \frac{ n! }{ ( n + k )!}
\end{align}
where $ g_n $ is the function defined by
\begin{align}\label{Def:FunctionGn}
g_n(x) = \sum_{k = 0}^n k x^{k - 1} = \frac{d}{dx} \frac{1 - x^n}{1 - x} 
= \frac{ 1 + nx^{n - 1} (x - 1 - x/n) }{ (1 - x)^2 }
\end{align}

We thus have
\begin{align*}
\Esp{ \xi_n(y)^k } & = \frac{n + k + 1 }{ (k + 1)!}  \int_{ \crochet{0, 1}^k }  \frac{ \det\prth{ g_{n + k}( e^{2i\pi (\theta_m - \theta_j) } y^{-2} ) }_{1 \leq m, j \leq k } }{ \abs{ \Delta(e^{2i\pi \theta_1  } y\inv , \dots, e^{2i\pi \theta_k  } y\inv) }^2 }    d\thetab \\
                   & = \frac{n + k + 1 }{ (k + 1)!} y^{  k(k-1)  } \int_{ \crochet{0, 1}^k }  \frac{ \det\prth{ g_{n + k}( e^{2i\pi (\theta_m - \theta_j)  } y^{-2} ) }_{1 \leq m, j \leq k } }{ \abs{ \Delta(e^{2i\pi \theta_1  }, \dots, e^{2i\pi \theta_k }) }^2 }    d\thetab 
\end{align*}
hence
\begin{align*}
F_n(y, A) = A^{n + 1} \sum_{k \geq 0} \frac{ (-1)^k }{ k! \, (k + 1)!} (A y^{n + k - 1} )^k   \int_{ \crochet{0, 1}^k }  \frac{ \det\prth{ g_{n + k}( e^{2i\pi (\theta_m - \theta_j)  } y^{-2} ) }_{1 \leq m, j \leq k } }{ \abs{ \Delta(e^{2i\pi \theta_1  }, \dots, e^{2i\pi \theta_k }) }^2 }    d\thetab 
\end{align*}

\noindent \textbf{\underline{Main contribution :}} Using \eqref{Def:FunctionGn}, i.e. $ g_n(x) =  \frac{ 1 + nx^n - n x^{n - 1} - x^n }{ (1 - x)^2 } = \frac{ 1 - x^n }{(1 - x)^2} - \frac{ n x^{n - 1} }{ 1 - x } $, we have, when $ N = n + k \to \infty $, and $ z \in \Uu $
\begin{align*}
g_N\prth{ y^{-2} z } & = \frac{ 1 - t^N }{(1 - t)^2}\Big\vert_{t = y^{-2} z  } - \frac{ N t^{N - 1} }{ 1 - t } \Big\vert_{t = y^{-2} z  } 
                 = -\frac{N z^{N - 1} y^{-2(N - 1)} }{ 1 - y^{-2} z} + O\prth{ \frac{  y^{-2N}  }{  (1 - y^{-2} z)^2 } } \\
               & = -\frac{N z^{N - 1} y^{-2(N - 1)} }{ 1 - y^{-2} z} \prth{ 1 + O\prth{ \frac{1}{N  (1 - y^{-2} z) } } }
\end{align*}

Note that the implied constant in the $ O $ is independent of $z$ and $y$. We then get
\begin{align*}
\det\prth{ g_N (y^{-2} u_i \overline{u_j}) }_{1 \leq i, j \leq k} & =  \sum_{ \sigma \in \Sg_k  } \varepsilon(\sigma) \prod_{i = 1}^k g_N (y^{-2} u_i \overline{u_{ \sigma(j) } }) \\
              & = \sum_{ \sigma \in \Sg_k  } \varepsilon(\sigma) N^k y^{-2 (N - 1) k } \prth{ \prod_{i = 1}^k \frac{- 1}{ 1 - y^{-2} u_i \overline{u_{ \sigma(j) } } } } \times \\
              & \hspace{+4cm } \prth{ 1 + O\prth{ \frac{1}{N } \sum_{i = 1}^k \frac{1}{ 1 - y^{-2} u_i \overline{u_{\sigma(i) } } } } } \\
              & = (-N)^k  y^{-2 (N - 1) k } \det\prth{ \frac{1}{1 - y^{-2} u_i \overline{u_j} } }_{1 \leq i, j \leq k} \times \\
              & \hspace{+4cm }  \prth{ 1 + O\prth{ \frac{1}{N } \sum_{i, j = 1}^k \frac{1}{ 1 - y^{-2} u_i \overline{u_j } } } }
\end{align*}

Using the Cauchy determinant \eqref{Def:CauchyDeterminant} and the relation \eqref{Eq:VandermondeDuality}, we get for $ u_j \in \Uu $
\begin{align*}
\det\prth{ \frac{1}{1 - y^{-2} u_i \overline{u_j} } }_{1 \leq i, j \leq k} & = \det\prth{ \frac{1}{( y^{-2} \overline{u_j}) (y^2 u_j - u_i ) } }_{1 \leq i, j \leq k} \\
                  & =  y^{2k} \prod_{j = 1}^k u_j \, \frac{ \Delta(y^2 \ub)  \Delta(-\ub)  }{ \prod_{1 \leq i, j \leq k} (y^2 u_i - u_j ) }      \\
                  & =  (-1)^{ k(k + 1)/2 } y^{2k + k(k + 1)} \prod_{j = 1}^k u_j^k \, \frac{ \Delta(\ub) \overline{\Delta(\ub)}  }{ \prod_{1 \leq i, j \leq k} (y^2 u_i u_j\inv -1) u_j  }      \\
                  & =  (-1)^{ k(k + 1)/2 } y^{ k(k + 3)} \frac{\prod_{j = 1}^k u_j^k}{ \prod_{1 \leq i, j \leq k} (-u_j) } \, \frac{ \Delta(\ub) \overline{\Delta(\ub)}  }{ \prod_{1 \leq i, j \leq k} (1 - y^2 u_i u_j\inv  ) }      \\ 
                  & = (-1)^{ k(3k + 1)/2 }\,  y^{ k(k + 3)}   \frac{ \Delta(\ub) \overline{\Delta(\ub)} }{ \prod_{1 \leq i, j \leq k} (1 -  y^2 u_i \overline{u_j} ) }  
\end{align*}

It is moreover clear that the function $ \ub \mapsto \prod_{1 \leq i, j \leq k} (1 -  y^2 u_i \overline{u_j} )\inv  $ is integrable on $ \Uu^k $ as $ y < 1 $. We thus get 
\begin{align*}
F_n(y, A) & = A^{n + 1} \sum_{k \geq 0} \frac{(-1)^k }{ k! \,(k + 1)! }  (A y^{ n + k  - 1} )^k  \times \\
             & \hspace{+3cm} (-1)^{ \frac{k(3k + 1)}{2} } ( -(n + k) y^{ -2(n - 1) - 2k + k + 3} )^k \Bigg[ \int_{ \Uu^k }   \prod_{1 \leq i, j \leq k} \frac{   1  }{  1 -  y^2 u_i \overline{u_j}  }  \prod_{\ell = 1}^k   \frac{d^*u_\ell}{u_\ell } \\ 
             &  \hspace{+4cm} + O\prth{ \frac{ y^2}{n } \int_{ \Uu^k }   \prod_{1 \leq i, j \leq k} \frac{   1  }{  1 -  y^2 u_i \overline{u_j} }  \sum_{i, j} \frac{1}{1 - y^2 u_i u_j\inv} }   \prod_{\ell = 1}^k   \frac{d^*u_\ell}{u_\ell }  \Bigg] \\
             & = A^{n + 1} \crochet{ \sum_{k \geq 0} \frac{(-1)^{ \frac{k(7k + 1)}{ 2 } }  }{ k! \,(k + 1)! }  (A y^{ -n  } (n + k) )^k     \int_{ \Uu^k }   \prod_{1 \leq i, j \leq k} \frac{   1  }{  1 -  y^2 u_i \overline{u_j}  }  \prod_{\ell = 1}^k   \frac{d^*u_\ell}{u_\ell }  + O\prth{ \frac{ f(y) }{n  } } }
\end{align*}

One can check that a possible choice for $ f(y) $ is given by $ e^{y/(1 - y^2)} $. For the choice
\begin{align*}
A := \frac{y^n }{n}
\end{align*}
and using $ (-1)^{ \frac{k(7k + 1)}{2} } = (-1)^{ \frac{ k( k + 1)}{2} }  $, we obtain
\begin{align*}
F_n\prth{ y , \frac{ y^n  }{ n } } & =  \prth{ \frac{y^n }{n} }^{n + 1 } \crochet{ \sum_{k \geq 0} \frac{(-1)^{ \frac{ k (k + 1)}{2} }  }{ k! \,(k + 1)! } \prth{ \frac{n + k}{  n} }^k  \int_{ \Uu^k }    \prod_{ i, j = 1}^k\frac{ 1 }{ 1 -  y^2 u_i \overline{u_j} }  \prod_{\ell = 1}^k   \frac{d^*u_\ell}{u_\ell }  + O\prth{ \frac{ f(y) }{n^2 } } } \\
              & =   \frac{ y^{n(n + 1) } }{ n^{n + 1 } }  \crochet{  \sum_{k \geq 0} \frac{ (-1)^{ \frac{ k( k + 1)}{2} }     }{ k! \,(k + 1)! }    \int_{ \Uu^k } \prod_{ i, j = 1}^k \frac{   1  }{  1 -  y^2 u_i \overline{u_j} }  \prod_{\ell = 1}^k  \frac{d^*u_\ell}{u_\ell }  + O\prth{ \frac{F(y)}{n  } } }
\end{align*}
where $ F(y) $ is a function that can be made explicit.

$ $

\noindent \textbf{\underline{Remainder :}} Define 
\begin{align*}
\varepsilon_n(y ) := \frac{ n^{n + 1 } }{ y^{n(n + 1) } }  \int_{ y^n / n }^{+\infty } \Espr{n}{ e^{ -t y^n \!\! \int_0^1 \abs{ Z_n\prth{  e^{ 2i\pi \theta } y\inv } }^2 d\theta } } t^n dt 
\end{align*}
so that 
\begin{align*}
\Prob{ \rho_n \leq y } = \frac{ y^{n(n + 1) } }{ n^{n + 1 } }\crochet{ \Fe(y) + O\prth{ \frac{F(y)}{n} } + \varepsilon_n(y) }
\end{align*}

Let $ \gammab_{n + 1} $ be a random variable defined by $ \Prob{ \gammab_{n + 1} \geq x } := \int_x^{+\infty} e^{-t } t^n \frac{dt}{n! } $ (i.e. Gamma-distributed). Let us suppose that $ \gammab_{n + 1} $ is independent of $ (X_k)_{1 \leq k \leq n} $, the determinantal point process of kernel $ (z_1, z_2) \mapsto g_n(z_1 \overline{z_2}) $ on the unit disk (namely the eigenvalues of the truncated $ CUE(n + 1) $ random matrix whose \eqref{Def:TruncatedCUE} is the law). Then, we have
\begin{align*}
\varepsilon_n(y ) & =  \int_1^{+\infty } \Espr{n}{ e^{ - \frac{s}{n }  y^{2n} \!\! \int_0^1 \abs{ Z_n\prth{  e^{ 2i\pi \theta } y\inv } }^2 d\theta } } s^n ds \\
                & = \Esp{ \frac{ n! }{ \prth{  \frac{y^{2n}  }{n }  \!  \int_0^1 \abs{ Z_n\prth{  e^{ 2i\pi \theta } y\inv } }^2 d\theta }^{n + 1}   } \Unens{ \gammab_{n  + 1} \geq \frac{y^{2n}  }{n }  \!  \int_0^1 \abs{ Z_n\prth{  e^{ 2i\pi \theta } y\inv } }^2 d\theta }  } \\
                & \leq \Esp{ \frac{ n! n^{n + 1} }{ \prth{  y^{2n} \!  \int_0^1 \abs{ Z_n\prth{  e^{ 2i\pi \theta } y\inv } }^2 d\theta }^{n + 1}   }   }
\end{align*}

We have moreover
\begin{align*}
y^{2n} \!\! \int_0^1 \abs{ Z_n\prth{  e^{ 2i\pi \theta } y\inv } }^2 d\theta  = \int_0^1 \prod_{k = 1}^n  \abs{ 1 - e^{2 i \pi \theta } y X_k  }^2 d\theta = \sum_{k = 0}^n y^{2 k } \abs{ e_k(X_1, \dots, X_n) }^2  
\end{align*}
and, using \eqref{Thm:LimitLaw} for $ t = y\inv $, with $ G(t) := \prod_{k \geq 1} (1 - t^{2k} ) $, one gets for all $ t \in (0, 1) $
\begin{align*}
\Esp{ \frac{ 1 }{ \prth{ \sum_{ k = 0 }^n t^{4n - 2k } \abs{ e_k(X_1, \dots, X_n) }^2 }^{n + 1} } }  \equivalent{n}{+\infty} t^{n (n + 1) } \frac{ G(t) }{n!}
\end{align*}

In particular, using the fact that for all $ t \in (0, 1) $ and for all $ k \in \intcrochet{0, n} $, $ t^{2k - 2n} \geq t^{ 4n - 2k } $, one gets
\begin{align*}
\varepsilon_n(y ) & \leq \Esp{ \frac{ n! n^{n + 1} }{ \prth{  y^{2n} \!  \int_0^1 \abs{ Z_n\prth{  e^{ 2i\pi \theta } y\inv } }^2 d\theta }^{n + 1}   }  } =  \Esp{ \frac{ n! n^{n + 1} }{ \prth{  \sum_{k = 0}^n y^{ 2k - 2n } \abs{ e_k(X_1, \dots, X_n) }^2 }^{n + 1}   }   } \\
                & \leq  \Esp{ \frac{ n! n^{n + 1} }{ \prth{  \sum_{k = 0}^n y^{ 4n - 2k } \abs{ e_k(X_1, \dots, X_n) }^2 }^{n + 1} }  }  \equivalent{n}{+\infty} y^{n (n + 1) } n^{n +1} G(y)  \tendvers{n}{ + \infty } 0
\end{align*}
hence the result.
\end{proof}

\begin{remark}
It is clear that $ \Fe(y) \in \Rr $ since it writes as
\begin{align*}
\Fe(y) := \sum_{k \geq 0} \frac{ (-1)^{ \frac{ k( k + 1)}{2} } (1 - y^2)^{-k}   }{ k! \,(k + 1)! }    \int_{ \crochet{0, 1}^k } \prod_{ 1 \leq m < j \leq k} \frac{   1  }{  \abs{ 1 -  y^2 e^{2 i \pi (\theta_m - \theta_j) } }^2 }  \prod_{\ell = 1}^k  d \theta_\ell  
\end{align*}
\end{remark}


\begin{remark}
The function $ \Fe $ is similar to a Fredholm expansion since it involves a series with integrals, but one can ask about a more ``classical'' expression, for instance, a series expansion of the form $ \Fe(y) = \sum_{k \geq 0} a_k y^k $. Such an expression is possible if one writes the expansion of the Cauchy product in terms of Schur functions or power functions (see \cite[ch. 1.3]{MacDo}), i.e.
\begin{align*}
\prod_{i, j = 1}^k \frac{1}{1 - y^2 u_i \overline{u_j} } = \sum_{ n \geq 0 } y^{2n } h_n\crochet{\ub \overline{\ub}} 
\end{align*}
with (the notations are the ones in \cite[ch. 2]{HaimanMacDo})
\begin{align*}
h_n\crochet{\ub \overline{\ub}} := \sum_{\lambda \vdash n} \abs{ s_\lambda(\ub) }^2 = \sum_{\lambda \vdash n} \frac{\abs{ p_\lambda(\ub) }^2}{z_\lambda }
\end{align*}

Integrating these expressions and applying the Fubini theorem for double sums (wich is convergent due to the presence of the term $ \frac{1}{k! (k + 1)!} $ and the fact that the number of terms in $ h_n\crochet{\ub\overline{\ub}} $ is equal to $p(n)$, the number of partitions of an integer $n$, and the fact that $ p(n) = O(e^{C \sqrt{n} } ) $ for $ C = \pi \sqrt{ 2/3 } $ in virtue of a celebrated result of Hardy and Ramanujan), one gets the result. Nevertheless, the integrals $ \int_{\Uu^k } \abs{s_\lambda(\ub) }^2 \frac{d^*\ub}{\ub} $ or $ \int_{\Uu^k } \abs{p_\lambda(\ub) }^2 \frac{d^*\ub}{\ub} $ do not have a simple expression other than a combinatorial sum involving semi-standard tableaux or equivalent quantities. We leave to the interested reader the exercise to express such quantities.
\end{remark}


\section{Conclusion and perspectives}

In view of the previous results, a natural continuation of the problem is the following one : compute the (precise) \textit{transition deviations} around $ y = 1 $, in particular, one can conjecture that there exists a function $ \psi $ such that for all $ x > 0 $
\begin{align*}
\frac{1}{n} \log \Prob{ \rho_n \leq e^{- x/n } } \tendvers{n}{+\infty } - \psi(x)
\end{align*}
and more generally that there exists a function $ \Ge $ such that
\begin{align*}
\Prob{ \rho_n \leq e^{- x/n } } \equivalent{n}{+\infty } e^{ - n \psi(x) + o(n) } \Ge(x)
\end{align*}

Note that one can replace the term $ e^{- x/n } $ by $ 1 - \frac{x}{n} $. The problem of the precise transition deviations is then reminiscent of another problem of random polynomials, the computation of the \textit{persistence exponent}~; in the case of the real Kac polynomial, when $ \max_k \abs{Z_k} $ is replaced by the maximum absolute value of the real roots, see e.g. \cite{AldousFyodorov, BrayMajumdarSchehr, DemboPoonenShaoZeitouni, MajumdarSchehr} and references cited. The proof of theorem \ref{Theorem:MainTheorem} can be adapted to this setting, but the rescaling of the integrals involve the moments of the characteristic polynomial of a random truncated $ CUE(n) $ matrix in the \textit{microscopic scaling} (see \cite{ChhaibiNajnudelNikeghbali, KillipRyckman} for the case of the $CUE$) and requires additional care to extract its asymptotic behaviour. We plan to address this question in a subsequent publication.

\section{Annex : Expectation of products of characteristic polynomials}\label{Section:Annexe1}

We prove here \eqref{Eq:MomentsCharPol}. We want to compute 
\begin{align*}
\Espr{n}{ \prod_{\ell = 1 }^k \abs{ Z_n(u_\ell) }^2 } = \int_{ \Dd^n } \prod_{\ell = 1}^k \prod_{j = 1}^n \abs{ z_j - u_\ell  }^2  \abs{ \Delta(\zb) }^2 \frac{d\zb }{\pi^n  }
\end{align*}

One has 
\begin{align*}
\prod_{\ell = 1}^k \prod_{j = 1}^n \prth{ z_j - u_\ell }  = \frac{ \Delta(z_1, \dots, z_n, u_1, \dots, u_k) }{ \Delta(z_1, \dots, z_n) \Delta(u_1, \dots, u_k)} =: \frac{\Delta(\zb, \ub) }{\Delta(\zb) \Delta(\ub) }
\end{align*}
hence
\begin{align*}
\Espr{n}{ \prod_{\ell = 1 }^k \abs{ Z_n(u_\ell) }^2 } = \frac{1}{ \abs{ \Delta(\ub) }^2   } \int_{ \Dd^n }  \abs{ \Delta(\zb, \ub) }^2 \frac{d\zb }{\pi^n  }
\end{align*}

We now compute
\begin{align}\label{JointIntensityOnDisk}
\int_{ \Dd^n }   \abs{ \Delta(\zb, u_1, \dots, u_k) }^2   \frac{d\zb}{\pi^n}
\end{align}

This is, up to a multiplicative constant, the \textit{$ n $-joint intensity} of a random variable of size $ n + k $ with a joint law given by $ \Pp_n $ defined in \eqref{Def:TruncatedCUE} (see e.g. \cite{TaoGUE6} or \cite[ch. 3]{BenHougKrishnapurPeresVirag}). 

A classical method expresses $ \abs{ \Delta(\zb, u_1, \dots, u_k) }^2 $ as $ C \det\prth{ K(x_i, x_j) }_{i, j \leq n + k} $ for a certain ``kernel'' $ K $ autoreproduced for the underlying scalar product, here $ L^2( \mu) $, where
\begin{align*}
\mu(dz) = \Unens{z \in \Dd } \frac{dz}{\pi}
\end{align*}

Start by writing the square modulus of the Vandermonde determinant in the variable $ \xb := (x_1, \dots, x_m) $ as
\begin{align*}
\abs{ \Delta( \xb ) }^2 & =   \Delta( \xb ) \overline{ \Delta( \xb ) }  = \det\prth{ x_i^{j-1} }_{1 \leq i, j \leq m} \det\prth{ \overline{x_i^{j-1}} }_{1 \leq i, j \leq m} \\
				   & = \det\prth{ Q_{j - 1}(x_i) }_{1 \leq i, j \leq m} \, \det\prth{ \overline{Q_{j - 1}(x_i)} }_{1 \leq i, j \leq m} \\
				   & =  \prod_{j = 0}^{m-1} \norm{ Q_j }^2_{ L^2(\mu) } \det\prth{ \sum_{\ell = 1}^m \frac{Q_{\ell - 1}(x_i) \overline{ Q_{\ell - 1}(x_j) }}{ \norm{ Q_\ell }^2_{ L^2(\mu) } } }_{1 \leq i, j \leq m} \\
				   & =: \prod_{j = 0}^{m-1} \norm{ Q_j }^2_{ L^2(\mu) } \det\prth{ R_m(x_i, x_j) }_{1 \leq i, j \leq m}
\end{align*}
where the $ Q_i $ are monic (i.e. with the higher degree coefficient equal to 1) and satisfy $ \deg(Q_j) = j $. The fact that $ \det( x_i^{j-1} )_{1 \leq i, j \leq m} = \det\prth{ Q_{j - 1}(x_i) }_{1 \leq i, j \leq m} $ comes from the fact that one can perform linear combinations of lines or columns without changing its value.

The function $ R_m : (x, y) \mapsto \sum_{\ell = 0}^{m - 1} Q_\ell(x ) \overline{ Q_\ell(y) } / \norm{ Q_\ell }^2_{ L^2(\mu) } $ is said to be a kernel. As such a kernel satisfies the following properties 
\begin{enumerate}
\item Autoreproduction : $ \int_\Cc R_m(x,y) R_m(y, z) d\mu(y) = R_m(x, z) $, 
\item Trace : $ \hspace{+2.2cm} \int_\Cc R_m(x,x) d\mu(x) = m $  
\end{enumerate}
the following formula holds for all $ k \leq m $ (see \cite[Lemma 1]{TaoGUE6} or \cite[ex. 4.1.1]{BenHougKrishnapurPeresVirag}) 
\begin{align*}
\int_{\Cc } \det\prth{ R_m(x_i, x_j) }_{1 \leq i, j \leq k} d\mu(x_k)  =  (m - k + 1)  \det\prth{ R_m(x_i, x_j) }_{1 \leq i, j \leq k-1}
\end{align*}

Iterating, and defining $ N^{\uparrow k } := N(N + 1) \cdots(N + k - 1) $, we get for all $ \ell \leq k $
\begin{align}\label{AutoreprTraceFormula}
\int_{\Cc^\ell } \det\prth{ R_m(x_i, x_j) }_{1 \leq i, j \leq k} d\mu(x_k) 
 \cdots d\mu(x_{k - \ell + 1}) =  (m - k + 1)^{\uparrow \ell }  \det\prth{ R_m(x_i, x_j) }_{1 \leq i, j \leq k - \ell}
\end{align}

In order to compute \eqref{JointIntensityOnDisk} with this last formula, we need to find the kernel $ R_n $ associated to $ \mu $. One thus needs to norm the columns of the Vandermonde matrix $ (Q_i(x_j))_{i, j \leq n} $ since these polynomials are monic. For this, we compute
\begin{align*}
\int_\Cc z^\ell \overline{z^k} \, d\mu (z) = 2   \int_0^1 \int_0^1 r^{k + \ell} e^{2 i \pi (\ell - k)\theta } \,  rdr d\theta = \frac{ 2 }{k + \ell + 2} \Unens{k = \ell} = \frac{ 1 }{  k + 1 } \Unens{k = \ell}
\end{align*}

In particular, for $ k = \ell $,
\begin{align*}
\int_\Cc \abs{ z^k }^2 d\mu (z) = \frac{ 1 }{ k + 1 } 
\end{align*}

The normed polynomials are thus given for $ k \in \intcrochet{0, n - 1} $ by
\begin{align}\label{Def:NormedPolynomials}
\frac{ Q_k (z) }{ \norm{Q_k }_{L^2(\mu) } } = \sqrt{ k + 1 } \, z^k  
\end{align}
and the kernel is
\begin{align*}
R_n (u, v) := \sum_{k = 0}^{n-1} \frac{ Q_k(u) \overline{Q_k(v)}}{ \norm{Q_k }^2_{L^2(\mu) }  } = \sum_{k = 0}^{n - 1} (k + 1) ( u \bar{v} )^k   =:  g_n\prth{  u \bar{v} }
\end{align*}
where $ g_n $ is defined in \eqref{Def:FunctionGn}.


Finally, one can write 
\begin{align*}
\Delta(\xb) = \det\prth{ x_i^{j-1} }_{1 \leq i, j \leq m} & = \prth{ \prod_{j = 1}^m \frac{ 1 }{ \sqrt{ j } } } \det\prth{ x_i^{j-1} \sqrt{ j} }_{1 \leq i, j \leq m}  = \frac{ 1 }{ \sqrt{  m! } } \det\prth{ \frac{Q_{j-1} (x_i)}{ \norm{Q_{j - 1} }_{L^2(\mu)} } }_{1 \leq i, j \leq m}
\end{align*}
which implies that
\begin{align}\label{Eq:Vandermonde=Noyau}
\abs{ \Delta( \xb ) }^2 & =   \Delta( \xb ) \overline{ \Delta( \xb ) }  = \frac{ 1 }{  m! } \det\prth{ R_m (x_i, x_j) }_{1 \leq i, j \leq m}
\end{align}

Setting $ \xb := (\zb, u_1, \dots, u_k) $, we can write \eqref{JointIntensityOnDisk} as
\begin{align*}
\int_{ \Dd^n }    \abs{ \Delta(\zb, u_1, \dots, u_k) }^2  \, \frac{d\zb}{ \pi^n  } & = \int_{ \Cc^n }  \abs{ \Delta( \zb , \ub  ) }^2  \,  d \mu^{\otimes n} (\zb) \\
	                    & =  \frac{ 1 }{  (n + k)! }   \int_{ \Cc^n }  \det\prth{ R_{n + k} (x_i, x_j) }_{1 \leq i, j \leq n  + k} \,  d \mu^{\otimes n}   (\zb ) \\
	                    & =  \frac{  n! }{ (n + k)!  }  \,    \det\prth{ R_{n + k} (u_i, u_j) }_{1 \leq i, j \leq k} \mbox{ by \eqref{AutoreprTraceFormula} } \\
	                    & =    \frac{ 1  }{ (n + k) (n + k - 1) \dots (n + 1) }   \det\prth{ g_{n + k} (u_i \overline{u_j} ) }_{1 \leq i, j \leq k}
\end{align*}
which is \eqref{Eq:MomentsCharPol}.

\section*{Acknowledgements}

The author thanks Gernot Akemann, Olivier H\'enard, Mario Kieburg, Igor Krasovski, Ashkan Nikeghbali, Dan Romik, Nick Simm, Oleg Zaboronski and Ofer Zeitouni for interesting discussions, questions and remarks concerning previous versions of this work.

A particular thanks is given to Michael Cranston and Elliot Paquette for technical discussions and computational help, and to Rapha\"el Butez for providing the author with an earlier version of \cite{ButezPAG} and interesting discussions that followed. The author acknowledges moreover the organisers of the conference ``Random matrix theory and strongly correlated systems'' held at the university of Warwick the 21-24 March 2016 that allowed him to meet with several of the aforementionned persons.

During the redaction of this work, the author was supported by the Schweizerischer Nationalfonds PDFMP2 134897/1 and by the EPSRC grant EP/L012154/1.


\bibliographystyle{amsplain}


\end{document}